\renewcommand{\section}{\@startsection%
{section}%
{1}%
{0em}%
{1.7em}%
{1.2em}%
{\normalfont\large\centering\bfseries}}
\renewcommand{\@seccntformat}[1]%
{\csname the#1\endcsname.\hspace{0.5em}}
\renewcommand{\thesection}{\arabic{section}}
\numberwithin{equation}{section}
\renewcommand\appendix{\par
\setcounter{section}{0}%
\setcounter{subsection}{0}%
\setcounter{theorem}{0}
\setcounter{table}{0}
\setcounter{figure}{0}
\gdef\thetable{\Alph{table}}
\gdef\thefigure{\Alph{figure}}
\section*{Appendix}
\gdef\thesection{\Alph{section}}
\setcounter{section}{1}}
\newtheorem{theorem}{Theorem}[section]
\newtheorem{proposition}[theorem]{Proposition}
\newtheorem{corollary}[theorem]{Corollary}
\theoremstyle{definition}
\newtheorem{definition}{Definition}
\newtheorem{remark}{Remark}
\newcommand{\ie}{\emph{i.e.\kern.2em}}
\newcommand{\cf}{\emph{cf.\kern.2em}}
\newcommand{\viz}{\emph{viz.\kern.2em}}
\newcommand{\eg}{\emph{e.g.\kern.2em}}
\newcommand{\complex}{\mathbb{C}}
\newcommand{\inner}[2]{\left\langle#1,#2\right\rangle}
\newcommand{\cH}{{\mathcal H}}
\newcommand{\cc}[1]{\overline{#1}}
\DeclareMathOperator{\dom}{dom}
\DeclareMathOperator{\ran}{ran}
\begin{document}

\title[Asymptotics of continuous media with high-contrast inclusions]{Operator-norm resolvent asymptotic analysis of continuous media with high-contrast inclusions}
\author{Kirill D. Cherednichenko}
\address{Department of Mathematical Sciences, University of Bath, Claverton Down, Bath, BA2 7AY, United Kingdom}
\email{cherednichenkokd@gmail.com}
\author{Alexander V. Kiselev}
\address{Faculty of Mathematics and Computer Science, St.Petersburg State University, 29 14th line of V.O., St.Petersburg, 199178 Russia {\sc and} ITMO University, 49A Kronverkskii pr., St.Petersburg, 197101 Russia}
\email{alexander.v.kiselev@gmail.com}
\author{Luis O. Silva}
\address{Departamento de F\'{i}sica Matem\'{a}tica, Instituto de Investigaciones en Matem\'aticas Aplicadas y en Sistemas, Universidad Nacional Aut\'onoma de M\'exico, C.P. 04510, M\'exico D.F. {\sc and} Department of Mathematical Sciences, University of Bath, Claverton Down, Bath, BA2 7AY, United Kingdom}
\email{silva@iimas.unam.mx}

\subjclass[2010]{47A45, 47F05, 35P25, 35Q61, 35Q74}

\keywords{Extensions of symmetric operators; Generalized boundary triples;
Boundary value problems; Spectrum; Transmission problems}

\begin{abstract}
Using a generalisation of the classical notion of the Weyl $m$-function and the related formulae for the resolvents of boundary-value problems, we analyse the asymptotic behaviour of solutions to a ``transmission problem" for a high-contrast inclusion in a continuous medium, for which we prove the operator-norm resolvent convergence to a limit problem of ``electrostatic" type. In particular, our results imply the convergence of the spectra of high-contrast problems to the spectrum of the limit operator, with order-sharp convergence estimates. The approach developed in the paper is of a general nature and can thus be successfully applied in the study of other problems of the same type.
\end{abstract}

\maketitle

\section{Introduction}
\label{sec:introduction}

Parameter-dependent problems for differential equations have traditionally attracted much interest within applied mathematics, by virtue of their potential for
replacing complicated formulations with more straightforward, and often explicitly solvable, ones. This drive has led to a plethora of asymptotic techniques, from perturbation theory to multi-scale analysis, covering a variety of applications to physics, engineering, and materials science. It would be an insurmountable task to give a comprehensive review of the related literature. Notwithstanding the classical status of this subject area, problems that require new ideas continue emerging, often motivated by novel wave phenomena. One of the recent application areas of this kind is provided by composites and structures involving components with highly contrasting material properties  (stiffness, density, refractive index). Mathematically, such problems lead to boundary-value formulations for classical operators (such as the Laplace operator), but with parameter-dependent coefficients. For example, problems of this kind have arisen in the study of periodic composite media with high contrast (or ``large coupling")
between the material properties of the components, see \cite{HempelLienau_2000}, \cite{Zhikov2000}, \cite{CherErKis}.

In the present work, we consider a prototype large-coupling transmission problem, posed on a bounded domain $\Omega\subset{\mathbb R}^d,$ $d=2,3,$ see Fig.\,\ref{fig:kaplya}, containing a ``low-index" (equivalently, ``high propagation speed") inclusion $\Omega_-,$ located at a positive distance to the boundary $\partial\Omega.$ Mathematically, this is modelled by a ``weighted" Laplacian $-a_{\pm}\Delta$, where $a_+=1$ (the weight on the domain
$\Omega_+:=\Omega\setminus\overline{\Omega}_-$), and $a_-\equiv a$ (the weight on the domain $\Omega_-$) is assumed to be large, $a_-\gg1.$ This is supplemented by the Neumann boundary condition
$\partial u/\partial n=0$ on the outer boundary $\partial\Omega,$
where $n$ is the exterior normal to $\partial\Omega,$ and ``natural" continuity conditions on the ``interface" $\Gamma:=\partial\Omega_-.$ For each $a,$ we consider time-harmonic vibrations
of the physical domain
represented by $\Omega,$ described by the eigenvalue problem for an appropriate operator in $L^2(\Omega).$

\begin{figure}[h!]
\begin{center}
\includegraphics[scale=0.7]{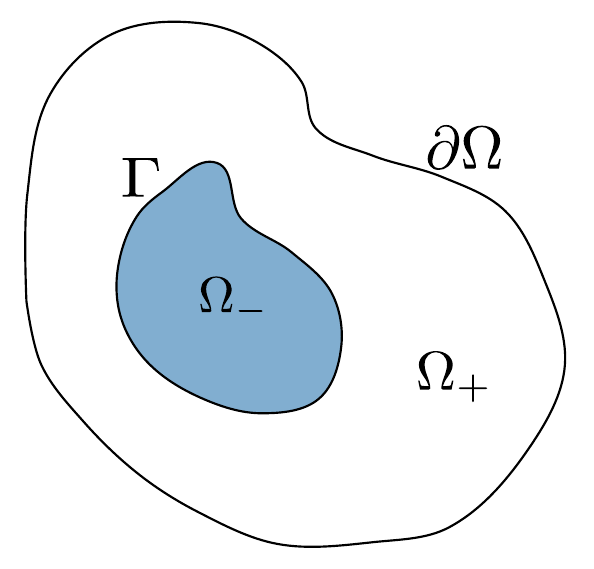}
\end{center}
\caption{Domain with a ``stiff" inclusion.\label{fig:kaplya}}
\end{figure}

A formal asymptotic argument using expansions in powers of  $a^{-1}$ suggests that convergent eigenfunction sequences for the above eigenvalue problems should converge (as $a\to\infty$) to either a constant or a function of the form
\[
v-\frac{1}{|\Omega|}\int_{\Omega_+}v,
\]
where
 $v$ satisfies the spectral boundary-value problem (BVP)
\begin{equation}
\label{electrostatic}
-\Delta v=z\biggl(v-\frac{1}{|\Omega|}\int_{\Omega_+}v\biggr)\ \ \ {\rm in}\ \Omega_+,\ \quad v\vert_\Gamma=0,
\qquad \dfrac{\partial v}{\partial n}\biggr\vert_{\partial\Omega}=0.
\end{equation}
Here the spectral parameter $z$ represents
 the ratio of the size of the original physical domain to the wavelength in its part
 represented by $\Omega_+.$

The problem (\ref{electrostatic})
 is related to the so-called ``electrostatic problem'' discussed in \cite[Lemma 3.4]{Zhikov_2004}, see also \cite{AKKL} and references therein, namely the eigenvalue problem for the self-adjoint operator $Q$ defined by the quadratic form
 \begin{equation}
 q(u, u)=\int_{\Omega_+}\nabla v\cdot\overline{\nabla v},\qquad u=v+c,\quad v\in H^1_{0,\Gamma}:=\bigl\{v\in H^1(\Omega_+),\ v\vert_\Gamma=0\bigr\},\quad c\in{\mathbb C}
 \label{qform}
 \end{equation}
on the Hilbert space $L^2(\Omega_+)+\mathbb{C}$, treated as a subspace of $L^2(\Omega)$.

Indeed (see \cite{Zhikov_2004} for details), it is easily seen that the eigenvalue problem $Qu=z u$ is solvable either when $z=0$, in which case $u=c$, or for $z>0$ such that the problem \eqref{electrostatic} admits a non-trivial solution. Thus, the formal asymptotic argument suggests that the limiting spectrum is precisely that of the electrostatic problem.

Denote by $A_0^+$ the Laplacian $-\Delta$ on $\Omega_+,$ subject to the Dirichlet condition on $\Gamma$ and the Neumann boundary condition
on $\partial\Omega$ and write the function $v$ in (\ref{electrostatic}) in the form of an eigenfunction series
\[
v=\sum_{j=1}^\infty d_j\phi_j^+,
\]
where $\lambda^+_j,$ $\phi^+_j,$ $j=1,2,\dots,$ are the eigenvalues and the corresponding orthonormal eigenfunctions, respectively, of $A_0^+.$
Noticing that the function ${\mathbbm 1}_+(x)=1,$ $x\in\Omega_+,$ can be written as
\[
{\mathbbm 1}_+=\sum_{j=1}^\infty\biggl(\int_{\Omega_+}\overline{\phi^+_j}\biggr)\phi^+_j,
\]
we obtain
\[
\sum_{j=1}^\infty d_j\lambda^+_j\phi^+_j=\sum_{j=1}^\infty z\biggl(d_j-\frac{1}{|\Omega|}\int_{\Omega_+}v\int_{\Omega_+}\overline{\phi^+_j}\biggr)\phi^+_j,
\]
and therefore
\begin{equation}
v=-\frac{z}{|\Omega|}\biggl(\int_{\Omega_+}v\biggr)\sum_{j=1}^\infty(\lambda_j^+-z)^{-1}\biggl(\int_{\Omega_+}\overline{\phi^+_j}\biggr)\phi^+_j,
\label{pre_final}
\end{equation}
as long as $z\neq \lambda^+_j,$ $j=1,2,\dots.$ Taking the integral over $\Omega_+$ on both sides of (\ref{pre_final}) and assuming that the integral of $v$ over $\Omega_+$ does not vanish yields, by incorporation of $z=0$  into the answer,
\begin{equation}
z\Biggl[|\Omega|+z\sum_{j=1}^\infty(\lambda_j^+-z)^{-1}\biggl|\int_{\Omega_+}\phi^+_j\biggr|^2\Biggr]=0.
\label{electrostatic_spectrum}
\end{equation}

Thus, the spectrum of the electrostatic problem is the union of two sets: a) the set of $z$ solving the equation \eqref{electrostatic_spectrum}
  and b) the set of those eigenvalues $\lambda^+_j$
for which the corresponding eigenfunction $\phi^+_j$ has zero mean over $\Omega_+.$

The main result of the present paper, which is the norm-resolvent asymptotics for the operator of the BVP introduced above, yields in particular the description \eqref{electrostatic_spectrum} for the limiting spectrum of the problem, together with an order-sharp estimate on the rate of the convergence, as $a\to +\infty$.

Relations similar to (\ref{electrostatic_spectrum}) appear in the analysis of periodic problems with micro-resonances (``metamaterials'') \cite{CherErKis}, where they provide zeros of the functions describing the dispersion of waves propagating through media modelled by such problems.


The present paper is a development of the recent study \cite{KCher,KCherYulia,KCherYuliaNab,CherErKis} aimed at implementing the
ideas of the boundary triples theory as proposed by Ryzhov \cite{Ryzh_spec} (in its turn, this analysis heavily draws upon the celebrated Birman-Kre\u\i n-Vi\v sik theory \cite{MR0080271, MR0024574, MR0024575, MR0051404}) in the context of problems of materials science and
wave propagation in inhomogeneous media.
Our recent papers cited above
have shown that the language of
boundary triples is particularly fitting for the analysis of composite media, as one of the key difficulties
in their analysis stems from the presence of interfaces (i.e., boundaries between individual material components) through which
an exchange of energy between different components of the medium takes
place. 
We point out that the papers \cite{ChKS_OTAA,CherednichenkoKiselevSilva,ChKS4} further demonstrate that an additional value of using the boundary triples approach is that the framework of functional models and the approach to scattering theory based thereupon (\cite{Drogobych,MR573902}) can be formulated in the most natural terms of Dirichlet-to-Neumann maps pertaining to the interfaces.


An asymptotic analysis of the static (or ``equilibrium") version of the above problem, where $z=0$ and a forcing term is added to the right-hand side, has been carried out in \cite{AKKL}, in the context of isotropic elasticity (which additionally implies that two material parameters are present, the so-called Lam\'{e} coefficients). The authors of \cite{AKKL}, using
the representation of solutions in terms of boundary layers, prove ``strong'' resolvent convergence of the original problem to the resolvent version of the ``electrostatic" problem (\ref{electrostatic}) (albeit framed in the context of linearised elasticity), still with $z=0.$
In the work \cite{Panasenko} different methods were used to obtain the spectral convergence; however, neither the effective operator of the ``limiting'' medium nor the norm-resolvent convergence to it were discussed.
 We argue that the approach we present here allows one to improve such results in two respects: a) the new estimates are of the operator-norm resolvent type, implying, in particular, the control of the convergence of the associated spectra and the exponential groups; b) our estimates are uniform with respect to the ``contrast" parameter $a$ and are order-sharp, i.e. the rate of convergence in terms of $a\to 0$ cannot be improved further.

We briefly outline the contents of the paper. In Section \ref{sec:triples-bvp}, we recall the main points of the abstract
construction of \cite{Ryzh_spec}  and introduce the key tools for our analysis. These include a representation for the resolvents of a class of boundary-value problems in terms of the $M$-operator. Using these  general formulae, in Section \ref{sec:example} we study the asymptotic behaviour of the operators corresponding to transmission problems for two-component media with contrasting material properties, as described above.
The asymptotic approximation of the spectra is discussed at the end of the paper, leading to the characterisation \eqref{electrostatic_spectrum}.

\section{Ryzhov triples for boundary-value problems}
\label{sec:triples-bvp}

In this section we follow \cite{Ryzh_spec} in outlining an operator framework suitable for
dealing with boundary-value problems.

\subsection{The boundary triple framework}
\label{BTF_sec}

The starting point of our construction is a
self-adjoint operator $A_0$ in a separable Hilbert space $\cH$ with
$0\in\rho(A_0)$, where $\rho(A_0),$ as usual, denotes the resolvent
set of $A_0$. Alongside  $\cH$, we consider an auxiliary Hilbert
space $\mathcal{E}$ and a bounded operator $\Pi: \mathcal{E}\to\mathcal{H}$
such that
\begin{equation*}
  \dom(A_0)\cap\ran(\Pi)=\{0\}\quad\text{and}\quad \ker(\Pi)=\{0\}.
\end{equation*}
Since $\Pi$ has a trivial kernel, there is a left inverse $\Pi^{-1},$ so that
$\Pi^{-1}\Pi=I_{\mathcal{E}}.$
We define
\begin{equation}
\label{eq:definition-A}
\begin{split}
  \dom(A)&:=\dom(A_0)\dotplus\ran(\Pi),\\
      A&:A_0^{-1}f+\Pi\phi\mapsto f,
\qquad f\in\mathcal{H},\phi\in\mathcal{E},
\end{split}
\end{equation}
\begin{equation}
\label{eq:definition-G-0}
\begin{split}
  \dom(\Gamma_0)&:=\dom(A_0)\dotplus\ran(\Pi),\\
      \Gamma_0&:A_0^{-1}f+\Pi\phi\mapsto \phi,
\qquad f\in\mathcal{H},\phi\in\mathcal{E},
\end{split}
\end{equation}
where neither $A$ nor $\Gamma_0$ is assumed closed or indeed closable.
The operator given in (\ref{eq:definition-A}) is the null extension of
$A_{0}$, while (\ref{eq:definition-G-0}) is the null extension of
$\Pi^{-1}$.
Note also that
\begin{equation*}
  \ker(\Gamma_0)=\dom(A_0)\,.
\end{equation*}
For $z\in\rho(A_0)$, consider the \emph{abstract} spectral BVP
\begin{equation}
\label{eq:first-abstract-bv-problem}
  \begin{cases}
    Au = zu,\\[0.2em]
    \Gamma_0 u = \phi,\qquad \phi\in\mathcal{E},
  \end{cases}
\end{equation}
where the second equation is seen as a boundary
condition.  As it is asserted in \cite[Thm.\,3.1]{Ryzh_spec}, there is
a unique solution $u$ of the BVP
(\ref{eq:first-abstract-bv-problem}) for any
$\phi\in\mathcal{E}$. Thus, there is an operator (clearly linear)
which assigns to any $\phi\in\mathcal{E}$ the solution $u$ of
(\ref{eq:first-abstract-bv-problem}). This operator is called the
solution operator for $A$ and is denoted by\footnote{The function $\gamma$ is
sometimes referred to as the $\gamma$-field.} $\gamma_z.$ An explicit
expression for it in terms of $A_0$ and $\Pi$ is obtained as
\begin{equation}
  \label{eq:solution-operator}
  \gamma_z:\phi\mapsto (I+z(A_0-zI)^{-1})\Pi\phi
\end{equation}
for any $z\in\rho(A_0)$.
Note that
\begin{equation*}
  I+z(A_0-zI)^{-1}=(I-zA_0^{-1})^{-1}
\end{equation*}
and that
(\ref{eq:definition-G-0}) and (\ref{eq:solution-operator}) immediately
imply
\begin{equation*}
  \Gamma_0\gamma_z=I_{\mathcal{E}}\,.
\end{equation*}
By (\ref{eq:solution-operator}) and a simple calculation,
one has
\begin{equation*}
  \ran\gamma_z=\ker(A-zI)\,.
\end{equation*}
We remark that, since $A$ is not required to be closed,
$\ran \gamma_z$ is not necessarily a subspace. This is precisely the kind of situation that commonly occurs in the analysis of BVPs.

In what follows, we consider (abstract) BVPs of the form
(\ref{eq:first-abstract-bv-problem}) associated with the operator $A$, with variable boundary conditions. To this
end, for a self-adjoint operator $\Lambda$ in $\mathcal{E},$ define
\begin{equation}
\label{eq:definition-G-kappa}
\begin{split}
  \dom(\Gamma_1)&:=\dom(A_0)\dotplus\Pi\dom(\Lambda),\\[0.3em]
  \Gamma_1&:A_0^{-1}f+\Pi\phi\mapsto\Pi^*f+\Lambda\phi,\qquad
  f\in\mathcal{H},\phi\in\dom(\Lambda).
\end{split}
\end{equation}
The operator $\Lambda$ can thus be seen as a parameter for the boundary operator $\Gamma_1.$

On the basis of \eqref{eq:solution-operator}, one obtains from
\eqref{eq:definition-G-kappa}  (see \cite[Eq.\,3.7]{Ryzh_spec}) that
\begin{equation*}
  \gamma_{\cc{z}}^*=\Gamma_1(A_0-zI)^{-1}\,.
\end{equation*}
Also, according to \cite[Thm.\,3.2]{Ryzh_spec}, the following Green's type identity holds:
\begin{equation*}
  \inner{Au}{v}_{\mathcal{H}}-\inner{u}{Av}_{\mathcal{H}}
=\inner{\Gamma_{1}u}{\Gamma_{0}v}_{\mathcal{E}}
-\inner{\Gamma_{0}u}{\Gamma_{1}v}_{\mathcal{E}},\qquad u,v\in\dom(\Gamma_{1})\,.
\end{equation*}


The above framework for triples $(A_{0},\Lambda,\Pi)$ stems from the Birman-Krein-Vi\v{s}ik theory
\cite{MR0080271,MR0024574,MR0024575,MR0051404}, rather than the theory
of boundary triples \cite{Gor}. We employ it next to introduce the notion of an $M$-operator, generalising the well-known notion of a Dirichlet-to-Neumann map in the context
of BVPs.
This generalisation helps us achieve two goals: on the one hand, it allows us to treat a transmission (rather than a boundary-value) problem, and on the other hand, it enters the formulae for operators resolvents that we will use for obtaining operator-norm error estimates in the large-coupling asymptotic regime.

\subsection{Definition and properties of the $M$-operator}
\label{M_sec}

Based on the notion of a triple, we now define the mentioned abstract version of the Dirichlet-to-Neumann map.
\begin{definition}
  \label{def:m-function}
  For a given triple $(A_{0},\Lambda,\Pi)$, define the operator-valued $M$-function associated with $A_0$ as follows. For any $z\in\rho(A_0)$, the operator $M(z)$ in $\mathcal{E}$  is defined by
  \begin{equation*}
    M(z): \phi\mapsto\Gamma_1\gamma_z\phi,\qquad \phi\in{\rm dom}\bigl(M(z)\bigr):=\dom(\Lambda).
  \end{equation*}
\end{definition}

A detailed description of how one casts in the language of boundary triples classical boundary-value problems, such as the Dirichlet problem for the Laplace operator on a bounded domain with sufficiently regular boundary, can be found in \cite{Ryzh_spec, ChKS4}.


Taking into account
(\ref{eq:definition-G-kappa}), one concludes from
Definition~\ref{def:m-function} that
\begin{equation}
\label{M_operator}
  M(z)=\Lambda + z\Pi^*(I-zA_0^{-1})^{-1}\Pi.
\end{equation}
Also, due to the self-adjointness of
$\Lambda$, one has
\begin{equation*}
  M^*(z)=M(\cc{z}).
\end{equation*}
Moreover, it is checked that  $M$ is an unbounded operator-valued Herglotz function, {\it i.e.,} $M(z)-M(0)$ is analytic and $\Im M(z)\ge 0$ whenever
$z\in\complex_+$. It is shown in \cite[Thm.\,3.3(4)]{Ryzh_spec} that
\begin{equation*}
  M(z)\Gamma_{0}u_{z}=\Gamma_{1}u_{z}\qquad\forall u_{z}\in\ker(A-zI)\cap\dom(\Gamma_{1}).
\end{equation*}
In this work we consider extensions (self-adjoint and non-selfadjoint) of the ``minimal'' operator
\begin{equation}
 \label{eq:B-definition}
  \widetilde{A}:=A_0\vert_{\ker(\Gamma_1)}
\end{equation}
that are restrictions of $A$.
It is proven in \cite[Sec.\,5]{Ryzh_spec} that $\widetilde{A}$ is
symmetric with equal deficiency indices. Moreover,
\cite[Prop.\,5.1]{Ryzh_spec} asserts that
 $\widetilde{A}$ does not depend on the parameter operator $\Lambda,$ contrary to what could be surmised from (\ref{eq:B-definition}).

\subsection{Resolvent formulae for general boundary-value problems}

\label{A_albet}

Still following \cite{Ryzh_spec}, we let $\alpha$ and $\beta$ be linear operators in the Hilbert
space $\mathcal{E}$ such that $\dom(\alpha)\supset\dom(\Lambda)$
and $\beta$ is bounded on $\mathcal{E}$.
Additionally, assume that
$\alpha+\beta\Lambda$ is closable and denote $\ss:=\overline{\alpha+\beta\Lambda}$. Consider
the linear set
\begin{equation}
  \label{eq:extension-of-boundary-conditions}
 {\mathcal H}_{\text \ss}:=\left\{A_{0}^{-1}f\dotplus\Pi\phi:f\in\mathcal{H},\,\,\phi
    \in\dom(\ss)\right\}
\end{equation}
Following \cite[Lem.\,4.1]{Ryzh_spec}, the identity
\[
(\alpha\Gamma_{0}+\beta\Gamma_{1})(A_0^{-1}f+\Pi\phi)=\beta\Pi^*f+(\alpha+\beta\Lambda)\phi,\qquad f\in {\mathcal H},\ \phi\in\dom(\Lambda),
\]
implies that $\alpha\Gamma_{0}+\beta\Gamma_{1}$ is correctly defined on
$\dom(A_{0})\dotplus\Pi\dom(\Lambda).$
The assumption that $\alpha+\beta\Lambda$ is closable is used to
extend the domain of definition of $\alpha\Gamma_{0}+\beta\Gamma_{1}$ to the set
\eqref{eq:extension-of-boundary-conditions}. Moreover, one shows that
${\mathcal H}_{\text \ss}$
is a Hilbert space with respect to the norm
\[
\Vert u\Vert_{\ss}^2:=\Vert f\Vert_{\mathcal H}^2+\Vert\phi\Vert_{\mathcal E}^2+\Vert\ss\phi\Vert_{\mathcal E}^2,\quad u=A_0^{-1}f+\Pi\phi.
\]
It follows that the constructed extension $\alpha\Gamma_0+\beta\Gamma_1$ is a bounded operator from ${\mathcal H}_{\ss}$ to $\mathcal E.$

According to
\cite[Thm.\,4.1]{Ryzh_spec}, if the operator
$\overline{\alpha+\beta M(z)}$ is boundedly invertible for
$z\in\rho(A_{0})$, then, on the one hand, the spectral BVP
\begin{equation*}
    \begin{cases}
    (A- zI)u=f,\\[0.3em]
    \left(\alpha\Gamma_0+\beta\Gamma_{1}\right) u
    = \phi,\qquad f\in\mathcal{H},\,\,\phi\in\mathcal{E},
  \end{cases}
\end{equation*}
has a unique solution $u\in{\mathcal H}_{\ss},$ where, as above, $\alpha\Gamma_0+\beta\Gamma_{1}$ is a bounded operator on
${\mathcal H}_{\ss}.$
On the other hand, it follows from
\cite[Thm.\,5.1]{Ryzh_spec} that the function
\begin{equation}
  (A_{0}-z I)^{-1}-(I-zA_{0}^{-1})^{-1}\Pi
  [\overline{\alpha+\beta M(z)}]^{-1}\beta\Pi^{*}
(I-zA_{0}^{-1})^{-1}
\label{Krein_formula1}
\end{equation}
is the resolvent of a closed operator\label{Aab} $A_{\alpha\beta}$ densely
defined in $\mathcal{H}$. Moreover,
$\widetilde{A}\subset A_{\alpha\beta}\subset A$ and
$\dom(A_{\alpha\beta})\subset\{u\in{\mathcal H}_{\ss}:
(\alpha\Gamma_{0}+\beta\Gamma_{1})u=0\}$.

\section{Large-coupling asymptotics for a transmission problem}

\label{sec:example}

The aim of this section is to translate a problem familiar to the application-minded reader into the language of boundary triple theory presented above and obtain new results for this problem.


\subsection{Problem formulation}

Suppose that $\Omega$ is a bounded $C^{1,1}$ domain, and $\Gamma\subset\Omega$ is a closed $C^{1,1}$ curve, so that $\Gamma=\partial\Omega_-$ is the common boundary of domains $\Omega_+$ and $\Omega_-,$ where $\Omega_-$ is strictly contained in $\Omega,$ such that $\overline{\Omega}_+\cup\overline{\Omega}_-=\overline{\Omega},$ see Fig.\,\ref{fig:kaplya}.

For $a>0,$ $z\in{\mathbb C}$
we consider the ``transmission" eigenvalue problem ({\it cf.} \cite{Schechter})
\begin{equation}
\label{eq:transmissionBVP}
\begin{cases}
&-\Delta u_+=zu_+\ \ {\rm in\ } \Omega_+,\\[0.4em]
&-a\Delta u_-=zu_-\ \ {\rm in\ } \Omega_-,\\[0.4em]
&u_+=u_-,\quad \dfrac{\partial u_+}{\partial n_+}+a\dfrac{\partial u_-}{\partial n_-}=0\ \ {\rm on\ } \Gamma,\\[0.7em]
&
\dfrac{\partial u_+}{\partial n_+}=0\ \
{\rm \ on\ } \partial\Omega,
\end{cases}
\end{equation}
where $n_\pm$ denotes the exterior normal (defined a.e.) to the corresponding part of the boundary.{\footnote{The Neumann boundary condition on $\partial\Omega$ can be replaced by a Robin condition with an arbitrary coupling
constant without affecting the analysis of this section.}} The above problem is understood in the strong sense,
{\it i.e.} $u_\pm\in H^2(\Omega_\pm),$ the Laplacian differential expression $\Delta$ is the corresponding combination of second-order weak derivatives, and the boundary values of $u_\pm$ and their normal derivatives are understood in the sense of traces according to the embeddings of $H^2(\Omega_\pm)$ into $H^s(\Gamma),$ $H^s(\partial\Omega),$ where $s=3/2$ or $s=1/2.$

\subsection{A reformulation in the language of triples}

In order to make our framework applicable to (\ref{eq:transmissionBVP}), we first consider its weak formulation. We then apply the regularity theory for elliptic BVPs, see
{\it e.g.} \cite{Schechter}, to show that its solutions are in fact the solutions to (\ref{eq:transmissionBVP}). Indeed, the results of \cite{Ryzh_spec}, see also (\ref{Krein_formula1}), show that the problem (\ref{eq:transmissionBVP}) in the weak formulation is the eigenvalue problem for a self-adjoint operator of an appropriate class $A_{\alpha\beta}$ introduced in Section \ref{A_albet}, and thus its solutions are in the domain of this operator. Then the result of \cite{Schechter} is used to show that these solutions have higher regularity, as required for the solvability of (\ref{eq:transmissionBVP}) in the strong sense.

We define the Dirichlet-to-Neumann map\footnote{For convenience, we define the
  Dirichlet-to-Neumann map via $-\partial u/\partial n\vert_{\partial\Omega}$ instead of the more common $\partial u/\partial n\vert_{\partial\Omega}$. As a side note, we mention that this is obviously not the only possible choice for the operator $\Lambda.$ In particular, the trivial option $\Lambda=0$ is always possible. Our choice of $\Lambda$ is motivated by our interest in the analysis of classical boundary conditions.}
\[
\Lambda:\phi\mapsto-\dfrac{\partial u_+}{\partial n_+}-a\dfrac{\partial u_-}{\partial n_-},\qquad \phi\in H^1(\Gamma),
\]
where $u_\pm$  are the harmonic functions in $\Omega_\pm$ subject to the above boundary condition on $\partial\Omega$ and the condition $u=\phi$ on $\Gamma.$
Clearly $\Lambda=\Lambda^++a\Lambda^-,$ where $\Lambda^+,$ $\Lambda^-$ are the Dirichlet-to-Neumann maps on each side of the interface $\Gamma,$ which are self-adjoint operators in $L^2(\Gamma),$ with domain $H^1(\Gamma).$ For sufficiently large values of $a,$ the operator $\Lambda$ has the same properties as $\Lambda^+$ and $\Lambda^-,$ see \cite[Lemma 2.1]{CherErKis}.

Translating the spectral BVP (\ref{eq:transmissionBVP}) into the language of the abstract framework developed in Section \ref{BTF_sec},
we define $A_0$ as the ``Dirichet decoupling" $A_0^+\oplus a A_0^-,$ corresponding to the decomposition $L^2(\Omega_+)\oplus L^2(\Omega_-)=L^2(\Omega)=:{\mathcal H},$ where $A_0^+$ is the Laplace operator with Dirichlet condition on $\Gamma$ and Neumann condition on $\partial\Omega,$ and $A_0^-$ is the Dirichlet Laplacian on $\Omega_-.$
 Furthermore, in the context of the transmission problem (\ref{eq:transmissionBVP}), the boundary space is given by $L^2(\Gamma)=:{\mathcal E}$ and  the abstract operator $\Pi$ of Section \ref{BTF_sec} is simply the Poisson operator of harmonic lift from ${\mathcal E}$ to ${\mathcal H}$ (subject to the Neumann condition on $\partial\Omega$), while its left
inverse is the operator of trace on $\Gamma$ for functions that are harmonic on $\Omega_-$ and $\Omega_+,$ possessing square summable boundary values on $\Gamma,$  and
$\Gamma_{0}$ is the null extension of the latter to $\bigl(H^2(\Omega_-)\cap H^1_0(\Omega_-)\bigr)\dotplus\bigl(H^2(\Omega_+)  \cap H^1_{0,\Gamma}(\Omega_+)\bigr)\dotplus\Pi
L^{2}(\Gamma),$ where $H^1_{0,\Gamma}(\Omega_+)$ consists of functions in $H^1(\Omega_+)$ with zero trace on $\Gamma,$ {\it cf.} (\ref{qform}).

The problem (\ref{eq:transmissionBVP}) is then written in the form $A_{\alpha\beta}u=zu$ with $\alpha=0, \beta=I,$
equivalently
\[
Au=zu,\qquad
\Gamma_1u=0,
\]
where $A$ is defined by (\ref{eq:definition-A}) and $\Gamma_1$ is defined by (\ref{eq:definition-G-kappa}).

Finally, the operator $M(z)$ of Definition \ref{def:m-function}  is the mapping
\[
M(z):\phi\mapsto
-\dfrac{\partial u_+}{\partial n_+}-a\dfrac{\partial u_-}{\partial n_-},\qquad \phi\in H^1(\Gamma),
\]
where $u_+,$ $u_-$ solve
\begin{equation*}
\begin{cases}
&-\Delta u_+=zu_+\ \ {\rm in\ } \Omega_+,\\[0.35em]
&-a\Delta u_-=zu_-\ \ {\rm in\ } \Omega_-,\\[0.35em]
&u_+=u_-=\phi\ \ {\rm on\ } \Gamma,\\[0.35em]
&
\dfrac{\partial u_+}{\partial n_+}=0\ \
{\rm \ on\ } \partial\Omega,
\end{cases}
\end{equation*}
and the formula (\ref{M_operator})
expresses $M(z)$ in terms of $\Lambda$ and the decoupling
$A_0^+\oplus A_0^-.$
Recall (Section \ref{M_sec}) that $M(z)$ is an (unbounded) operator-valued Herglotz function, analytic in ${\mathbb C}\setminus{\mathbb R}$ so that $M(z)=M^*(\overline{z}),$ $z\in{\mathbb C}\setminus{\mathbb R}.$  In addition, for all values $z$ outside a discrete set of points, $M(z)$ is invertible and its inverse is compact.  Similarly, Definition  \ref{def:m-function} yields the $M$-operators $M^\pm$ corresponding to the components $\Omega_\pm,$
so that
\[
M^+(z):\phi\mapsto
-\dfrac{\partial u_+}{\partial n_+},\qquad M^-(z):\phi\mapsto
-a\dfrac{\partial u_-}{\partial n_-},\qquad \phi\in H^1(\Gamma),
\]
where $u_+$ and $u_-$ are as above.

By a similar argument to the one of \cite[Theorem 3.3, Theorem 5.1]{Ryzh_spec}, the following statement holds, {\it cf.} \cite{BMNW2008}.

\begin{proposition}
The spectrum of (\ref{eq:transmissionBVP}), {\it i.e.} the set of values $z$ for which (\ref{eq:transmissionBVP}) has a nonzero solution in the strong sense as described above,
coincides with the set of $z$ to which the inverse of $\alpha+\beta M(z)\equiv M(z)$ does not admit an analytic continuation.
\end{proposition}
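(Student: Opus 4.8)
The plan is to realise the transmission problem (\ref{eq:transmissionBVP}) as the eigenvalue problem for the operator $A_{\alpha\beta}$ of Section \ref{A_albet} with $\alpha=0$, $\beta=I$, and then to read off its spectrum from the Kre\u\i n-type resolvent formula (\ref{Krein_formula1}). Concretely, $A_{0I}$ is the operator whose resolvent is
\[
(A_0-zI)^{-1}-(I-zA_0^{-1})^{-1}\Pi\,M(z)^{-1}\,\Pi^*(I-zA_0^{-1})^{-1},
\]
valid for those $z\in\rho(A_0)$ for which $M(z)=\overline{\alpha+\beta M(z)}$ is boundedly invertible. The first step is therefore to verify, via the regularity argument already sketched before the statement (elliptic regularity for the weak formulation, as in \cite{Schechter}, combined with $\widetilde A\subset A_{0I}\subset A$ and $\dom(A_{0I})\subset\{u\in\mathcal H_{\ss}:\Gamma_1u=0\}$ from \cite[Thm.\,5.5]{Ryzh_spec}), that the strong solutions of (\ref{eq:transmissionBVP}) are exactly the eigenfunctions of $A_{0I}$; this identifies the spectrum of the transmission problem with $\spec(A_{0I})$.

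Next I would split $\spec(A_{0I})$ according to whether $z\in\rho(A_0)$ or $z\in\spec(A_0)$. For $z\in\rho(A_0)$: if $M(z)$ is boundedly invertible then (\ref{Krein_formula1}) is a genuine resolvent, so $z\in\rho(A_{0I})$; conversely, if $z\in\rho(A_0)\cap\rho(A_{0I})$ then, rearranging the resolvent identity (or using the solution-operator identity $\Gamma_1\gamma(z)=M(z)$ together with the fact that $\ran\gamma(z)=\ker(A-zI)$), one shows $M(z)$ is boundedly invertible, with inverse expressed through $(A_{0I}-zI)^{-1}$, $\gamma(z)$ and $\gamma(\bar z)^*$. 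Hence on $\rho(A_0)$ the spectrum of $A_{0I}$ coincides with the set where $M(z)^{-1}$ fails to exist as a bounded operator. Because $M$ is an (operator-valued, analytic on $\mathbb C\setminus\mathbb R$) Herglotz function and, as noted in the excerpt, $M(z)^{-1}$ is compact and analytic off a discrete set, the map $z\mapsto M(z)^{-1}$ is meromorphic on $\rho(A_0)$; the points where it has no analytic continuation are precisely the points where bounded invertibility of $M(z)$ fails. This is the step I expect to be the main obstacle: one must argue carefully that the ``discrete set'' of non-invertibility of $M(z)$ on $\rho(A_0)$ consists genuinely of poles of $M(z)^{-1}$ (so that ``no analytic continuation of the inverse'' and ``$z\in\spec(A_{0I})$'' label the same set), which requires the analytic Fredholm alternative applied to $I-z\Pi^*(I-zA_0^{-1})^{-1}\Pi$ relative to $\Lambda^{-1}$, using compactness of the relevant embeddings on the $C^{1,1}$ interface.

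The remaining case is $z_0\in\spec(A_0)$, i.e. $z_0$ an eigenvalue of $A_0^+$ or of $A_0^-$ (with eigenfunction vanishing, resp., on $\Gamma$ and $\partial\Omega$, or on $\Gamma$). Here $M(z)=\Lambda+z\Pi^*(I-zA_0^{-1})^{-1}\Pi$ has a pole at $z_0$ coming from the resolvent factor, so $z_0$ is automatically a point at which $M(z)^{-1}$ has no analytic continuation \emph{unless} the singular part cancels. On the operator side, one checks directly from (\ref{Krein_formula1}) whether $z_0\in\spec(A_{0I})$: the key point is that a Dirichlet eigenfunction of $A_0^\pm$ with \emph{nonzero normal derivative} on $\Gamma$ is not an eigenfunction of the transmission operator, while one with vanishing normal data survives; in either case the behaviour of $M(z)$ near $z_0$ mirrors this, because the residue of $z\Pi^*(I-zA_0^{-1})^{-1}\Pi$ at $z_0$ is $z_0\,\Pi^*P_0\Pi$ with $P_0$ the spectral projection, and $\Pi^*P_0$ encodes exactly the normal traces of the $A_0$-eigenfunctions. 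Matching these two descriptions completes the proof. Throughout, the argument is the abstract one of \cite[Thm.\,3.11, Thm.\,5.5]{Ryzh_spec}, specialised to the present triple $(A_0^+\oplus A_0^-,\Lambda,\Pi)$, with the only genuinely new input being the compactness/analytic-Fredholm bookkeeping that turns ``bounded invertibility of $M(z)$'' into ``analytic continuability of $M(z)^{-1}$''.
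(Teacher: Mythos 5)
Your proposal is correct and follows essentially the same route as the paper, which itself gives no written argument but simply defers to \cite[Thm.\,3.11, Thm.\,5.5]{Ryzh_spec} and \cite{BMNW2008}: identify the transmission problem with $A_{0I}$, read off $\spec(A_{0I})\cap\rho(A_0)$ from the Kre\u\i n formula \eqref{Krein_formula1}, and use the (stated) discreteness and compactness of $M(z)^{-1}$ together with the analytic Fredholm alternative to equate non-invertibility with failure of analytic continuation of the inverse. Your handling of the residual case $z_0\in\spec(A_0)$ via the residue $\Pi^*P_0\Pi$ and possible compensation of singularities is exactly the phenomenon the paper points to in its final remark citing \cite{Mikhailova_Pavlov}, so no gap there either.
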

The representation (\ref{M_operator})
applied to $M^-(z)$
implies that
\begin{equation}
M^-(z)= a\Lambda^- + z\Pi_-^*\bigl(I-a^{-1}z (A_0^-)^{-1}\bigr)^{-1}\Pi_-
=a\Lambda^-+z\Pi_-^*\Pi_-+O(a^{-1}),
\label{eq:Mstiff_asymp}
\end{equation}
where $A_0^-$ is the Dirichlet Laplacian on $\Omega_-,$ and $\Pi_-$ is the harmonic lift from $\Gamma$ to $\Omega_-.$


\subsection{Asymptotic analysis and the main result}
In what follows we analyse the resolvent of the operator $A_a$ of the transmission problem (\ref{eq:transmissionBVP}), which coincides with the resolvent of the operator $A_{0 I},$ in terms of the notation of Section \ref{sec:triples-bvp}.  In particular, the spectrum of $A_a$ coincides with the spectrum of  (\ref{eq:transmissionBVP}). Our approach is based on the use of the Kre\u\i n formula \eqref{Krein_formula1} with $\alpha=0,$ $\beta=I,$ where for the asymptotic analysis of $M(z)^{-1}$ we employ (\ref{eq:Mstiff_asymp}) and  separate the singular and non-singular parts of $\Lambda^-.$

To this end, note first that the spectrum of $\Lambda^-$
 consists of
the values $\mu$  (``Steklov eigenvalues") such that the problem
\vskip -0.5cm
\begin{equation*}
\begin{cases}
\Delta u=0,\ \ u\in H^2(\Omega_-),&\\[0.4em]
\dfrac{\partial u}{\partial n}=-\mu u\ \ {\rm on}\ \Gamma,&
\end{cases}
\end{equation*}
has a non-trivial solution.
The least (by absolute value) Steklov eigenvalue is zero, and
 the associated normalised eigenfunction
 (``Steklov eigenvector") is  $\psi_*:=|\Gamma|^{-1/2}{\mathbbm 1}_\Gamma\in{\mathcal E}\equiv L^2(\Gamma).$
 Introduce the corresponding orthogonal  projection $P:=\langle \cdot, \psi_*\rangle_{\mathcal E}\psi_*,$ which is a spectral projection relative to $\Lambda^-,$ and
decompose the boundary space ${\mathcal E}$:
\begin{equation}
{\mathcal E}=P{\mathcal E}\oplus P^\perp{\mathcal E},
\label{space_decomp}
\end{equation}
where $P^\perp:=I-P.$
This yields the following matrix representation for
$\Lambda^-:$
$$
\Lambda^-=\begin{pmatrix}
                        0& 0 \\[0.4em]
                        0 & \Lambda^-_\bot
                      \end{pmatrix},
$$
where $\Lambda^-_\bot:=P^\perp\Lambda^-P^\perp$ is treated as a self-adjoint operator in $P^\perp{\mathcal E}$. 

We write the operator $M(z)$
as a block-operator matrix relative to the decomposition (\ref{space_decomp}),
followed by an application of the Schur-Frobenius inversion formula, see \cite[Theorem 2.3.3]{Tretter}.
To this end, notice that for all $\psi\in\dom \Lambda$ one has $P\psi\in \dom \Lambda,$ and therefore $P^\perp\Lambda P$ is well defined on $\dom \Lambda.$ Similarly, $P^\perp\psi=\psi-P\psi\in \dom \Lambda,$ and $P \Lambda P^\perp$ is also well defined. Furthermore, by the self-adjointness of $\Lambda,$ one has
$
P \Lambda P^\perp\psi
=\bigl\langle P^\perp\psi,\Lambda\psi_*\bigr\rangle \psi_*,
$
and therefore $\bigl\| P \Lambda P^\perp\bigr\|_{{\mathcal E}\to{\mathcal E}}\leq\|\Lambda\psi_*\|_{\mathcal E}.$ It follows that $P \Lambda P^\perp$ is extendable to a bounded mapping on $P^\perp{\mathcal E}.$
 A similar calculation applied to $P^\perp\Lambda P$ and $P \Lambda P$ shows that these
 are extendable to bounded mappings on $P{\mathcal E}.$
  Therefore, for each $z\in\rho(A_0^+)\cap\rho(A_0^-)$ the operator $M(z)$ admits the
representation


\vskip -0.8em
\begin{equation*}
M(z)=\begin{pmatrix}
       {\mathbb A} & {\mathbb B} \\[0.4em]
       {\mathbb E} & {\mathbb D}
     \end{pmatrix}, \qquad {\mathbb A}, {\mathbb B}, {\mathbb E} \text{ bounded.}
\end{equation*}

For evaluating $M(z)^{-1}$ we use the Schur-Frobenius inversion formula \cite{Fuerer}, \cite[Theorem 2.3.3]{Tretter}
\begin{equation}
\label{eq:SchurF}
{\begin{pmatrix}
       {\mathbb A} & {\mathbb B} \\
       {\mathbb E} & {\mathbb D}
     \end{pmatrix}}^{-1}=
\begin{pmatrix}
  {\mathbb A}^{-1}+{{\mathbb A}^{-1}{\mathbb B}}{\mathbb S}^{-1}{\mathbb E}{\mathbb A}^{-1} & -{\mathbb A}^{-1}{\mathbb B}{\mathbb S}^{-1} \\[0.4em]
  -{\mathbb S}^{-1}{\mathbb E}{\mathbb A}^{-1} & {\mathbb S}^{-1}
\end{pmatrix},\qquad
{\mathbb S}:={\mathbb D}-{\mathbb E} {\mathbb A}^{-1} {\mathbb B}.
\end{equation}


Using the fact that ${\mathbb S}^{-1}=(I-{\mathbb D}^{-1}{\mathbb E}{\mathbb A}^{-1}{\mathbb B})^{-1}{\mathbb D}^{-1},$ where $\|{\mathbb D}^{-1}\|\leq Ca^{-1},$ and therefore ${\mathbb S}$  is boundedly invertible with a uniformly small bound,
we obtain (see \cite{CherErKis} for details)
\begin{equation}
\label{eq:estimatefortheorem}
M(z)^{-1}=\begin{pmatrix}
       {\mathbb A} & {\mathbb B} \\[0.3em]
       {\mathbb E} & {\mathbb D}
     \end{pmatrix}^{-1}=
\begin{pmatrix}
  {\mathbb A}^{-1}& 0\\[0.3em]
  0 & 0
\end{pmatrix}
+O(a^{-1}).
\end{equation}

\begin{theorem}
\label{thm:NRA}
Fix $\sigma>0$ and a compact set $K\subset{\mathbb C},$ and denote $K_\sigma:=\bigl\{z\in K: {\rm dist}(z, \mathbb R)\geq \sigma\bigr\}.$
There exist $C, a_0>0$ such that for all $z\in K_\sigma,$ $a\geq a_0$ one has
$$
\bigl\|({A}_a-z)^{-1}-(A_{P^\perp,P}-z)^{-1}\bigr\|_{L^2(\Omega)\to L^2(\Omega)}\leq Ca^{-1}.
$$
\end{theorem}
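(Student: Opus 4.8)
The plan is to read both resolvents off the Kre\u\i n-type formula \eqref{Krein_formula1} and subtract. For the transmission operator $A_a=A_{0I}$ (the choice $\alpha=0$, $\beta=I$) and $z\in K_\sigma$ — where $z\in\rho(A_0)\cap\rho(A_a)$, both operators being self-adjoint, so that $M(z)$ is boundedly invertible — formula \eqref{Krein_formula1} reads
\begin{equation*}
(A_a-z)^{-1}=(A_0-z)^{-1}-(I-zA_0^{-1})^{-1}\Pi\,M(z)^{-1}\,\Pi^{*}(I-zA_0^{-1})^{-1}.
\end{equation*}
The operator $A_{P^\perp,P}$ is the operator $A_{\alpha\beta}$ of Section~\ref{A_albet} with $\alpha=P^\perp$, $\beta=P$; the hypotheses of \cite[Thm.\,4.5, Thm.\,5.5]{Ryzh_spec} hold here, since $\alpha=P^\perp$, $\beta=P$ are bounded and $\alpha+\beta\Lambda=P^\perp+P\Lambda$ is bounded — hence closable — by the bounds on $P\Lambda P$ and $P\Lambda P^\perp$ recorded before \eqref{eq:SchurF}. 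Granting that $\cc{\alpha+\beta M(z)}=\cc{P^\perp+PM(z)}$ is boundedly invertible on $K_\sigma$ (see below), \eqref{Krein_formula1} gives
\begin{equation*}
(A_{P^\perp,P}-z)^{-1}=(A_0-z)^{-1}-(I-zA_0^{-1})^{-1}\Pi\,\bigl[\cc{P^\perp+PM(z)}\bigr]^{-1}P\,\Pi^{*}(I-zA_0^{-1})^{-1}.
\end{equation*}

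Next I would evaluate the middle factor in the block form afforded by \eqref{space_decomp}. With $M(z)=\left(\begin{smallmatrix}{\mathbb A}&{\mathbb B}\\{\mathbb E}&{\mathbb D}\end{smallmatrix}\right)$, the operator $P^\perp+PM(z)$ has block form $\left(\begin{smallmatrix}{\mathbb A}&{\mathbb B}\\0&I\end{smallmatrix}\right)$ — the only unbounded entry, ${\mathbb D}$, does not occur — so its closure is bounded with inverse $\left(\begin{smallmatrix}{\mathbb A}^{-1}&-{\mathbb A}^{-1}{\mathbb B}\\0&I\end{smallmatrix}\right)$, and right multiplication by $P=\left(\begin{smallmatrix}I&0\\0&0\end{smallmatrix}\right)$ gives $\bigl[\cc{P^\perp+PM(z)}\bigr]^{-1}P=\left(\begin{smallmatrix}{\mathbb A}^{-1}&0\\0&0\end{smallmatrix}\right)$. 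Subtracting the two resolvent identities yields
\begin{equation*}
(A_a-z)^{-1}-(A_{P^\perp,P}-z)^{-1}=(I-zA_0^{-1})^{-1}\Pi\,\Bigl(\begin{pmatrix}{\mathbb A}^{-1}&0\\0&0\end{pmatrix}-M(z)^{-1}\Bigr)\,\Pi^{*}(I-zA_0^{-1})^{-1}.
\end{equation*}
By \eqref{eq:estimatefortheorem} the operator in parentheses has norm $O(a^{-1})$, uniformly for $z\in K_\sigma$. The flanking factors are uniformly bounded on $K_\sigma$ for $a\ge a_0$: $\norm{\Pi}=\norm{\Pi^{*}}$ is independent of $a$ (the harmonic lift does not see the weight $a$), while $(I-zA_0^{-1})^{-1}=I+z(A_0-z)^{-1}$ is bounded by $1+\sigma^{-1}\sup_{z\in K}\abs{z}$, since $\norm{(A_0-z)^{-1}}\le{\rm dist}(z,{\mathbb R})^{-1}\le\sigma^{-1}$ throughout $K_\sigma$. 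This gives the bound $Ca^{-1}$ claimed in the statement.

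The step that needs care — and the reason $a_0$ is permitted to depend on $K_\sigma$ — is the uniform (in $a\ge a_0$) bounded invertibility of the scalar ${\mathbb A}(z)=\inner{M(z)\psi_*}{\psi_*}_{\mathcal E}$ on $K_\sigma$, which is also what makes the $O(a^{-1})$ in \eqref{eq:estimatefortheorem} uniform in $z$. Here one uses the Herglotz structure: since $\Lambda^-\psi_*=0$, \eqref{M_operator} gives ${\mathbb A}(z)=\inner{\Lambda^+\psi_*}{\psi_*}+z\inner{(I-zA_0^{-1})^{-1}\Pi\psi_*}{\Pi\psi_*}$, and a spectral-theorem computation gives $\Im{\mathbb A}(z)=(\Im z)\int\lambda^{2}\abs{\lambda-z}^{-2}\,d\mu_a(\lambda)$, where $\mu_a$ is the scalar spectral measure of $A_0$ associated with $\Pi\psi_*$. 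Since the $\Omega_-$-component of $\Pi\psi_*$ is the constant $\abs{\Gamma}^{-1/2}{\mathbbm 1}_{\Omega_-}$, which puts mass $\abs{\Omega_-}/\abs{\Gamma}$ on the spectrum of the decoupled operator on $\Omega_-$ — a set contained in $[a\lambda_1^-,\infty)$, with $\lambda_1^-$ the lowest Dirichlet eigenvalue, on which the integrand is bounded below once $a$ is large — one obtains $\abs{{\mathbb A}(z)}\ge\abs{\Im{\mathbb A}(z)}\ge c>0$ on $K_\sigma$ with $c$ independent of $a$. Hence ${\mathbb A}^{-1}$ is uniformly bounded; and since the resolvent identities above are used only at points of $K_\sigma$, no global invertibility statement for $M(z)$ is required, while $A_a$ and $A_{P^\perp,P}$ are the fixed ($z$-independent) closed operators furnished by \cite[Thm.\,5.5]{Ryzh_spec}.
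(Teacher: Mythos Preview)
Your argument is correct and follows the same route as the paper's proof: both apply the Kre\u\i n formula \eqref{Krein_formula1} to each resolvent, reduce $\bigl[\cc{P^\perp+PM(z)}\bigr]^{-1}P$ to the block $\left(\begin{smallmatrix}{\mathbb A}^{-1}&0\\0&0\end{smallmatrix}\right)$ (the paper phrases this as $P(PM(z)P)^{-1}P$ via Schur--Frobenius, which is the same thing), and then invoke \eqref{eq:estimatefortheorem} for the difference. You supply more detail than the paper does --- the uniform bounds on the flanking factors and the Herglotz-based lower bound on $\abs{{\mathbb A}(z)}$ --- whereas the paper outsources the latter to \cite{CherErKis}; but the skeleton is identical.
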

\begin{proof}
We use (\ref{Krein_formula1}) with $\alpha=0,$ $\beta=I$ for the resolvent $({A}_a-z)^{-1}$
and
with $\alpha=P^\perp,$ $\beta=P$ for $\bigl(A_{P^\perp,P}-z\bigr)^{-1}.$
In the former case  we use (\ref{eq:estimatefortheorem}) and in the latter case we write
$$
\bigl(\overline{P^\perp+PM(z)}\bigr)^{-1}P=P\bigl(PM(z)P\bigr)^{-1}P,
$$
by the Schur-Frobenius inversion formula \cite[Section 1.6]{Tretter}, see \eqref{eq:SchurF}.\footnote{We remark that $P^\perp+PM(z)$ is triangular (${\mathbb A}=PM(z)P,$ ${\mathbb B}=PM(z)P^\perp,$ ${\mathbb E}=0,$ ${\mathbb D}=I$ in \eqref{eq:SchurF}) with respect to the decomposition $\mathcal E=P\mathcal E\oplus P^\perp\mathcal E$.}
The claim follows by comparing the obtained expressions for the two resolvents.
\end{proof}




We now rewrite the result of Theorem \ref{thm:NRA} in a block-matrix form relative to the decomposition
$\mathcal{H}=P_-\mathcal{H}\oplus P_+\mathcal{H}=L^2(\Omega_-)\oplus L^2(\Omega_+),$ where $P_-,P_+$ are orthogonal projections from $L^2(\Omega)$ to $L^2(\Omega_-),L^2(\Omega_+),$ respectively.
This allows us to express the asymptotics of $(A_a-z)^{-1}$ in terms of the generalised resolvent \cite{Naimark1940,Naimark1943}
$R_a(z):=P_+ (A_a-z)^{-1}P_+,$ analysed next.

\begin{proposition}
One has
\begin{equation*}
R_a(z)=(A_0^+-z)^{-1}-
 \gamma_z^+\bigl(M^+(z)+M^-(z)\bigr)^{-1}(\gamma_{\bar z}^+)^*,
\end{equation*}
where $\gamma_z^+: \phi\mapsto u$ is the solution operator of the BVP
\begin{equation}
\begin{cases}
-\Delta u-z u=0,\quad u\in \dom A_0^+\dotplus \ran\Pi_+,\\[0.2em]
u|_\Gamma= \phi,\\[0.3em]
\dfrac{\partial u}{\partial n_+}=0\ \ {\rm on\ }\partial\Omega.
\end{cases}
\label{gamma}
\end{equation}
\end{proposition}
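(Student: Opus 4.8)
The plan is to start from the Kreĭn-type resolvent formula (\ref{Krein_formula1}) applied to $A_a=A_{0I}$ (so $\alpha=0$, $\beta=I$), which gives
\[
(A_a-z)^{-1}=(A_0-zI)^{-1}-(I-zA_0^{-1})^{-1}\Pi\,M(z)^{-1}\,\Pi^*(I-zA_0^{-1})^{-1},
\]
and then to compress this identity by the projection $P_+$ onto $L^2(\Omega_+)$. Since $A_0=A_0^+\oplus A_0^-$ is block-diagonal with respect to $\mathcal H=L^2(\Omega_+)\oplus L^2(\Omega_-)$, one has $P_+(A_0-zI)^{-1}P_+=(A_0^+-z)^{-1}$ (identified with its extension by zero), so the first term produces exactly $(A_0^+-z)^{-1}$. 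For the second term, I would unwind the definitions: $\Pi=\Pi_+\oplus\Pi_-$ is the harmonic lift into each component, $(I-zA_0^{-1})^{-1}\Pi=\gamma(z)$ is the solution operator of (\ref{eq:first-abstract-bv-problem}), and its $\Omega_+$-component is precisely $\gamma_z^+$, the solution operator of the BVP (\ref{gamma}) — a harmonic-type lift of the Dirichlet datum $\phi$ on $\Gamma$ into $\Omega_+$, solving $-\Delta u_\phi=zu_\phi$ with the Neumann condition on $\partial\Omega$. Likewise $\Pi^*(I-zA_0^{-1})^{-1}$ restricted to the $\Omega_+$-component is $(\gamma_{\bar z}^+)^*$, using $\gamma(\bar z)^*=\Gamma_1(A_0-zI)^{-1}$ and the self-adjointness structure of $\Pi$. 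Thus $P_+(A_a-z)^{-1}P_+=(A_0^+-z)^{-1}-\gamma_z^+\,M(z)^{-1}\big|_{\text{compressed}}\,(\gamma_{\bar z}^+)^*$, and it remains only to identify the compressed middle factor with $(M^+(z)+M^-(z))^{-1}$.

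The key algebraic step is the additivity $M(z)=M^+(z)+M^-(z)$, which follows directly from the definitions of $M^\pm$ given just before the Proposition: $M(z)\phi=-\partial u_+/\partial n_+-a\,\partial u_-/\partial n_-$, while $M^+(z)\phi=-\partial u_+/\partial n_+$ and $M^-(z)\phi=-a\,\partial u_-/\partial n_-$, with $u_\pm$ the same transmission-datum extensions in both cases. Granting this, $M(z)^{-1}=(M^+(z)+M^-(z))^{-1}$ as operators in $\mathcal E=L^2(\Gamma)$, and since the whole middle factor $\Pi M(z)^{-1}\Pi^*$ already maps through the boundary space $\mathcal E$, the compression by $P_+$ acts only on the outer lift/trace operators and converts $\Pi$ into $\Pi_+$ (equivalently $\gamma(z)$ into $\gamma_z^+$) on the left and $\Pi^*$ into $\gamma_{\bar z}^{+*}$ on the right, without touching $M(z)^{-1}$ itself. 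Collecting the three pieces yields the asserted formula
\[
R_a(z)=(A_0^+-z)^{-1}-\gamma_z^+\bigl(M^+(z)+M^-(z)\bigr)^{-1}(\gamma_{\bar z}^+)^*.
\]

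The main obstacle I expect is bookkeeping the compression correctly: one must check that $P_+$ applied to $(I-zA_0^{-1})^{-1}\Pi$ genuinely equals $\gamma_z^+$ as defined by (\ref{gamma}) — in particular that the $\Omega_+$-component of the abstract solution operator satisfies the Neumann condition on $\partial\Omega$ and has the Dirichlet trace $\phi$ on $\Gamma$ — and symmetrically for $(\gamma_{\bar z}^+)^*$ on the right, so that no cross terms coupling $\Omega_+$ and $\Omega_-$ survive the compression. This is where the precise definition of $\Pi$ (harmonic lift subject to the Neumann condition on $\partial\Omega$) and of $\Gamma_0$ (the null extension of the $\Gamma$-trace) must be used carefully; once the identification $P_+\gamma(z)=\gamma_z^+$ and $\gamma(\bar z)^*P_+=(\gamma_{\bar z}^+)^*$ is in place, together with $M(z)=M^+(z)+M^-(z)$, the result is immediate.
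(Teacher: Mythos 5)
Your proposal is correct and follows essentially the same route as the paper, whose proof is simply the one-line remark that the result follows from the definitions of $M^\pm$ and a direct application of the Kre\u\i n formula (\ref{Krein_formula1}) with $\alpha=0$, $\beta=I$; you have merely written out the compression by $P_+$ and the additivity $M(z)=M^+(z)+M^-(z)$ explicitly. The only cosmetic point is that the identification of the right-hand factor is most directly obtained by taking adjoints in $\gamma(z)=(I-zA_0^{-1})^{-1}\Pi$, rather than via the identity $\gamma(\bar z)^*=\Gamma_1(A_0-zI)^{-1}$.
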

\begin{proof}
By the definition of $M^+,$ $M^-$ and a direct application of (\ref{Krein_formula1}), as in \cite[Lemma 3.2]{CherErKis}.
\end{proof}

A comparison of the latter result with  \eqref{Krein_formula1} taking into account \eqref{eq:solution-operator} yields:




\begin{corollary}
The generalised resolvent $R_a(z)$ is the solution operator for the BVP
\begin{equation*}
\begin{cases}
-\Delta u-zu=f, \quad f\in L^2(\Omega_+),
\\[0.4em]
\Gamma_1^+ u=-M^-(z)\Gamma_0^+u,
\\[0.4em]
\dfrac{\partial u}{\partial n_+}=0
\ \ {\rm on\ }\partial\Omega.\nonumber
\end{cases}
\end{equation*}
\end{corollary}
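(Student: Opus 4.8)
The plan is to read off the corollary directly from the resolvent representation of $R_a(z)$ obtained in the previous proposition, together with the general Kre\u\i n-type resolvent formula \eqref{Krein_formula1} and the interpretation of the operators $A_{\alpha\beta}$ as solution operators of boundary-value problems (Theorem 4.5 and Theorem 5.5 of \cite{Ryzh_spec}, recalled in Section \ref{A_albet}). First I would identify $R_a(z)=(A_0^+-z)^{-1}-\gamma_z^+\bigl(M^+(z)+M^-(z)\bigr)^{-1}(\gamma_{\bar z}^+)^*$ as an instance of \eqref{Krein_formula1} for the component $\Omega_+$: here the role of the ``decoupled'' operator is played by $A_0^+$ (Dirichlet on $\Gamma$, Neumann on $\partial\Omega$), the $M$-operator is $M^+(z)$, and the extra summand $M^-(z)$ in the middle factor plays the role of the boundary-parameter operator. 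Concretely, taking $\alpha=M^-(z)$ and $\beta=I$ in the abstract formula \eqref{Krein_formula1} applied to the triple built from $A_0^+$, one has $\overline{\alpha+\beta M^+(z)}=\overline{M^-(z)+M^+(z)}=M(z)$ (using $M(z)=M^+(z)+M^-(z)$, which follows from the very definitions of $M,M^\pm$ given above via \eqref{M_operator}), and the corresponding operator $A_{\alpha\beta}$ is, by Theorem 5.5 of \cite{Ryzh_spec}, the solution operator of the BVP $(A-zI)u=f$ subject to $(\alpha\Gamma_0^++\beta\Gamma_1^+)u=0$, i.e. $M^-(z)\Gamma_0^+u+\Gamma_1^+u=0$, which is exactly the stated interface condition $\Gamma_1^+u=-M^-(z)\Gamma_0^+u$.

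Next I would make the identification of $\gamma_z^+$ with the solution operator of \eqref{gamma} explicit, which is already contained in the statement of the preceding proposition, and note that $(\gamma_{\bar z}^+)^*=\Gamma_1^+(A_0^+-zI)^{-1}$ by the analogue of the identity $\gamma(\bar z)^*=\Gamma_1(A_0-zI)^{-1}$ recorded in Section \ref{M_sec}. Feeding these into \eqref{Krein_formula1}, one reads the right-hand side of the proposition's formula for $R_a(z)$ verbatim as the resolvent of the operator $A_{M^-(z),I}$ on $L^2(\Omega_+)$, subject to the Neumann condition on $\partial\Omega$ which is built into the triple for $A_0^+$. Since by Theorem 4.5 of \cite{Ryzh_spec} this operator's resolvent is the solution operator for the BVP $(A-zI)u=f$, $(\alpha\Gamma_0^++\beta\Gamma_1^+)u=0$, and since $f$ ranges over $L^2(\Omega_+)=P_+\mathcal H$ precisely because $R_a(z)=P_+(A_a-z)^{-1}P_+$ acts on that subspace, the corollary follows. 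The Neumann condition on $\partial\Omega$ is inherited unchanged and appears as the third line of the displayed system.

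The one genuinely delicate point — and the only place I would slow down — is the $z$-dependence of the ``boundary parameter'' $\alpha=M^-(z)$: the abstract scheme of Section \ref{A_albet} is stated for a \emph{fixed} pair $(\alpha,\beta)$, whereas here $\alpha$ varies with the spectral parameter, so one is dealing with a generalised (energy-dependent) boundary condition rather than one defining a single operator. The clean way around this is to fix $z$ and observe that, for that fixed $z$, $M^-(z)$ is a perfectly good operator in $\mathcal E$ with $\dom(M^-(z))=\dom(\Lambda)\supset\dom(\Lambda^+)$, so the hypotheses ``$\dom(\alpha)\supset\dom(\Lambda^+)$, $\beta$ bounded, $\alpha+\beta\Lambda^+$ closable'' required for \eqref{Krein_formula1} are met (closability and the bounded invertibility of $\overline{M^-(z)+M^+(z)}=M(z)$ for $z$ outside the discrete exceptional set follow from the Herglotz and Fredholm properties of $M$ noted after \eqref{eq:Mstiff_asymp}); the resulting family of BVPs, parametrised by $z$, is then exactly what is meant by ``$R_a(z)$ is the solution operator for the BVP \dots''. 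I would state this caveat in one sentence and otherwise let the computation speak, citing \cite[Theorems 4.5, 5.5]{Ryzh_spec} and \cite[Lemma 3.2]{CherErKis} for the bookkeeping, in parallel with the proof of the preceding proposition.
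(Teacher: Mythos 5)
Your argument is correct and is essentially the paper's own (the corollary is stated there without proof, as an immediate reinterpretation of the preceding proposition): you read the representation $R_a(z)=(A_0^+-z)^{-1}-\gamma_z^+\bigl(M^+(z)+M^-(z)\bigr)^{-1}(\gamma_{\bar z}^+)^*$ backwards through \eqref{Krein_formula1} with $\alpha=M^-(z)$, $\beta=I$ for the triple on $\Omega_+$, which is exactly how the authors intend it. Your explicit caveat about the $z$-dependence of $\alpha$ (making this a generalised, energy-dependent boundary condition handled pointwise in $z$) is a worthwhile addition that the paper leaves implicit.
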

Theorem \ref{thm:NRA} now implies an operator-norm asymptotics for the generalised resolvents $R_a$ as $a\to\infty$.
\begin{theorem}\label{thm:NRA_gen}
For all $z\in K_\sigma$ the operator $R_a(z)$
admits the asymptotics
$R_a(z)=R_{\rm eff}(z)+O(a^{-1}),$ as $a\to\infty,$ in the operator-norm topology,
where $R_{\rm eff}(z)$ is the solution operator for the BVP
\begin{equation}
\label{eq:BVPBz}
\begin{cases}
-\Delta u-zu=f, \quad f\in L^2(\Omega_+),\\[0.35em]
\alpha(z)\Gamma_0^+u+ \beta\Gamma_1^+u=0,\\[0.35em]
\dfrac{\partial u}{\partial n_+}=0 \text{ on } \partial \Omega,
\end{cases}
\end{equation}
with $\alpha(z)=P^\perp+PM^-(z)P$ and $\beta=P$.


\end{theorem}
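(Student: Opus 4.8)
The plan is to derive the claimed asymptotics from Theorem~\ref{thm:NRA} by a block-matrix bookkeeping argument, transporting the operator-norm estimate for the full resolvents of $A_a$ and $A_{P^\perp,P}$ to the compressions $P_+(A_a-z)^{-1}P_+$ and $P_+(A_{P^\perp,P}-z)^{-1}P_+$. First I would observe that compression by the (fixed, $a$-independent) orthogonal projection $P_+$ is norm-nonincreasing, so Theorem~\ref{thm:NRA} immediately gives
\[
\bigl\|R_a(z)-P_+(A_{P^\perp,P}-z)^{-1}P_+\bigr\|\le Ca^{-1},\qquad z\in K_\sigma,
\]
uniformly. Thus the entire content of the statement is the identification $R_{\rm eff}(z)=P_+(A_{P^\perp,P}-z)^{-1}P_+$ together with the assertion that this operator is the solution operator of the BVP \eqref{eq:BVPBz} with $\alpha(z)=P^\perp-PM^-(z)P$ and $\beta=P$.

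To identify $R_{\rm eff}(z)$ I would apply the Kre\u\i n formula \eqref{Krein_formula1} to $A_{P^\perp,P}$, i.e.\ with $\alpha=P^\perp$, $\beta=P$, and compress by $P_+$. Using the block structure already recorded in the proof of Theorem~\ref{thm:NRA}, namely $\bigl(\overline{P^\perp+PM(z)}\bigr)^{-1}P=P\bigl(PM(z)P\bigr)^{-1}P$, and then inserting the decomposition $M=M^++M^-$ together with \eqref{eq:Mstiff_asymp} and the block form of $\Lambda^-$ (which is $P$-diagonal with vanishing $P$-block), one computes $PM(z)P=PM^+(z)P+PM^-(z)P$ modulo the harmless $O(a^{-1})$ term; but since we only want the limit object $R_{\rm eff}(z)$, the cleanest route is to start directly from the Kre\u\i n formula for $A_{P^\perp,P}$ (no asymptotics needed) and recognise the compressed resolvent as a solution operator of a mixed Steklov-type boundary condition. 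Concretely, the second boundary relation of \eqref{eq:BVPBz} reads $P^\perp\Gamma_0^+u-PM^-(z)P\Gamma_0^+u+P\Gamma_1^+u=0$; splitting along $\mathcal E=P\mathcal E\oplus P^\perp\mathcal E$ this is equivalent to $P^\perp\Gamma_0^+u=0$ on the $P^\perp$-component and $P\Gamma_1^+u=PM^-(z)P\Gamma_0^+u$ on the $P$-component. Comparing with the characterisation of $\dom A_{P^\perp,P}$ via $\{u\in\mathcal H_{\ss}:(\alpha\Gamma_0+\beta\Gamma_1)u=0\}$ and restricting to the $\Omega_+$-sector (where $\Gamma_0,\Gamma_1$ reduce to $\Gamma_0^+,\Gamma_1^+$ since on $\Omega_-$ the function is determined by its $\Gamma$-trace through the $a$-independent limiting data), one reads off exactly \eqref{eq:BVPBz}. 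The verification that this BVP is uniquely solvable for $z\in K_\sigma$ follows from the bounded invertibility of $PM(z)P$ off a discrete set (already noted for $M(z)$, and inherited by the $P$-block because $\Lambda^-_\perp$ contributes nothing to it), which is why $a_0$ and the shrinking of $K$ to $K_\sigma$ appear.

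The main obstacle I anticipate is the precise matching of the limiting boundary condition: one must check that, after compression to $L^2(\Omega_+)$, the role formerly played by the full interface operator $M^-(z)$ (which blows up like $a\Lambda^-$) collapses onto its single surviving piece $PM^-(z)P$, because the singular part $a\Lambda^-_\perp$ forces $P^\perp\Gamma_0^+u\to 0$ in the limit — this is the Dirichlet-type condition on the ``oscillating'' part of the trace — while the zero-Steklov-mode part of $M^-(z)$, which by \eqref{eq:Mstiff_asymp} is $z\Pi_-^*\Pi_- + O(a^{-1})$ compressed to $P\mathcal E$, remains finite and shows up as $PM^-(z)P$ in $\alpha(z)$. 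Making this rigorous amounts to the Schur-complement estimate already quoted ($\|\mathbb D^{-1}\|\le Ca^{-1}$, $\mathbb S$ uniformly boundedly invertible, hence \eqref{eq:estimatefortheorem}) combined with the observation that the off-diagonal blocks $\mathbb B,\mathbb E$ of $M(z)$ are bounded uniformly in $a$ (shown above via $\|P\Lambda P^\perp\|\le\|\Lambda\psi_*\|$ and the analogous bounds); once these uniform bounds are in hand, the identification of $R_{\rm eff}(z)$ with the solution operator of \eqref{eq:BVPBz} is a formal rewriting of the Kre\u\i n formula and the error is $O(a^{-1})$ uniformly on $K_\sigma$, as claimed.
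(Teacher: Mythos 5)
Your proposal is correct and follows essentially the same route as the paper: compress the estimate of Theorem \ref{thm:NRA} by the fixed projection $P_+$, then identify $R_{\rm eff}(z)$ with $P_+(A_{P^\perp,P}-z)^{-1}P_+$ by writing both via the Kre\u\i n formula \eqref{Krein_formula1} and the Schur--Frobenius identity $\bigl(\overline{P^\perp+PM(z)}\bigr)^{-1}P=P\bigl(PM(z)P\bigr)^{-1}P$, using $PM(z)P=PM^+(z)P+PM^-(z)P$. The extra discussion of the $a\Lambda^-_\perp$ singular part is not needed here (that work is already encapsulated in Theorem \ref{thm:NRA}), but it does no harm.
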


\begin{proof}
On the one hand, by Theorem \ref{thm:NRA}, the resolvent $({A}_a-z)^{-1}$ is $O(a^{-1})$-close to
$$
\bigl(A_{P^\perp,P}-z\bigr)^{-1}=(A_0-z)^{-1}-\gamma_z\bigl(\overline{P^\perp+P M(z)}\bigr)^{-1}P \gamma_{\bar z}^*,
$$
and therefore
\begin{equation}
\begin{aligned}
R_a(z)&=P_+ (A_0-z)^{-1} P_+- P_+ \gamma_z\bigl(\overline{P^\perp+P M(z)}\bigr)^{-1}P \gamma_{\bar z}^*P_+
+O(a^{-1})
\\[0.4em]
&=(A_0^+-z)^{-1}-\gamma_z^+\bigl(\overline{P^\perp +P M(z)}\bigr)^{-1}P(\gamma_{\bar z}^+)^*+O(a^{-1})
\\[0.4em]
&
=(A_0^+-z)^{-1}-\gamma_z^+ P\bigl(PM(z)P\bigr)^{-1}P (\gamma_{\bar z}^+)^*+O(a^{-1}).
\end{aligned}
\label{eq:calc_gen_res}
\end{equation}

On the other hand, by (\ref{Krein_formula1}) and applying the inversion formula \eqref{eq:SchurF}, we obtain
\begin{equation}
\begin{aligned}
R_{\rm eff}(z)&=\bigl(A_0^+-z\bigr)^{-1}-\gamma_z^+\bigl(\overline{P^\perp+PM^-(z)P+PM^+(z)}\bigr)^{-1}P(\gamma_{\bar z}^+)^*
\\[0.3em]
&=\bigl(A_0^+-z\bigr)^{-1}-\gamma_z^+P\bigl(PM^+(z)P+PM^-(z)P\bigr)^{-1}P(\gamma_{\bar z}^+)^*.
\end{aligned}
\label{eq:BVPBz1}
\end{equation}
Comparing the right-hand sides of \eqref{eq:calc_gen_res} and \eqref{eq:BVPBz1} completes the proof.
\end{proof}

Theorem \ref{thm:NRA_gen} can be further clarified by considering
 the ``truncated'' boundary space\footnote{In what follows we consistently supply the (finite-dimensional)  ``truncated'' spaces and operators pertaining to them by the breve overscript.}
$\breve {\mathcal E}:=P\mathcal E.$
Introduce the truncated harmonic lift
by $\breve{\Pi}_+:=\Pi_+|_{\breve{\mathcal E}}$ and Dirichlet-to-Neumann map $\breve{\Lambda}^+:=P\Lambda^+\vert_{\breve {\mathcal E}}.$

\begin{theorem}
\label{thm:NRA_gen_truncated}
Denote $\breve{M}^+(z):=PM^+(z)\vert_{\breve{\mathcal E}}=\breve{\Lambda}^+ + z \breve{\Pi}^*_+\bigl(1-z (A_0^+)^{-1}\bigr)^{-1}\breve{\Pi}_+.$
The formula
\begin{equation}
\label{eq:gen_res_trunc}
R_{\rm eff}(z)=\bigl(A_0^+-z\bigr)^{-1}-\breve{\gamma}_z^+\bigl(\breve{M}^+(z)+PM^-(z)P\bigr)^{-1} (\breve{\gamma}_{\bar z}^+)^*
\end{equation}
holds, where $\breve{\gamma}_z^+:\phi\mapsto u_\phi$ is the solution operator of the problem
\begin{equation*}
\begin{cases}
-\Delta u_\phi-z u_\phi=0,\ \ u_\phi\in \dom A_0^+\dotplus \ran\breve{\Pi}_+,\\[0.1cm]
\Gamma_0^+ u_\phi= \phi, \quad \phi\in \breve{\mathcal E}.
\end{cases}
\end{equation*}
\end{theorem}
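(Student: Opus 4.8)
The plan is to derive \eqref{eq:gen_res_trunc} directly from the representation of $R_{\rm eff}(z)$ obtained in the proof of Theorem~\ref{thm:NRA_gen}, namely
\[
R_{\rm eff}(z)=\bigl(A_0^+-z\bigr)^{-1}-\gamma_z^+P\bigl(PM^+(z)P+PM^-(z)P\bigr)^{-1}P(\gamma_{\bar z}^+)^*,
\]
by compressing each factor on the right-hand side to the one-dimensional space $\breve{\mathcal E}=P\mathcal E=\Span\{\psi_*\}$. The guiding remark is that every factor already acts through $P$: the solution operator enters as $\gamma_z^+P$, its adjoint as $P(\gamma_{\bar z}^+)^*$, and the middle factor is, by its definition in Theorem~\ref{thm:NRA_gen}, the inverse of an operator acting from $P\mathcal E$ to itself.

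First I would identify $\breve\gamma_z^+$ with the restriction $\gamma_z^+\vert_{\breve{\mathcal E}}$. For $\phi\in\breve{\mathcal E}$ the function $u_\phi:=\gamma_z^+\phi$ satisfies $\Gamma_0^+u_\phi=\phi$, so in the decomposition $u_\phi=(A_0^+)^{-1}f\dotplus\Pi_+\psi$ one necessarily has $\psi=\phi\in\breve{\mathcal E}$, whence $\Pi_+\phi=\breve\Pi_+\phi\in\ran\breve\Pi_+$; thus $u_\phi\in\dom A_0^+\dotplus\ran\breve\Pi_+$ and $u_\phi$ solves the truncated BVP, which by the argument of \cite[Thm.\,3.2]{Ryzh_spec} (using $z\in\rho(A_0^+)$ and $\ker\Gamma_0^+=\dom A_0^+$) has a unique solution. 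Hence $\gamma_z^+P=\breve\gamma_z^+P$ and, since the adjoint of a restriction is the compression of the adjoint, $P(\gamma_{\bar z}^+)^*=(\breve\gamma_{\bar z}^+)^*$, where $P$ is regarded as the co-restriction $\mathcal E\to\breve{\mathcal E}$.

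Next I would compress the $M$-operators. Since in the transmission problem $\Omega_+$ and $\Omega_-$ are coupled only through the Dirichlet datum on $\Gamma$, the Neumann data add, i.e. $M(z)=M^+(z)+M^-(z)$, so the operator inverted in the displayed formula is $PM(z)P$ restricted to $\breve{\mathcal E}$, which equals $\breve M^+(z)+PM^-(z)P$ with $PM^-(z)P$ understood as acting on $\breve{\mathcal E}$. The identity $\breve M^+(z)=\breve\Lambda^++z\breve\Pi_+^*(1-z(A_0^+)^{-1})^{-1}\breve\Pi_+$ then follows from \eqref{M_operator} applied to $M^+(z)=\Lambda^++z\Pi_+^*(I-z(A_0^+)^{-1})^{-1}\Pi_+$, compressing on the left by $P$ and on the right to $\breve{\mathcal E}$, and using $\breve\Lambda^+=P\Lambda^+\vert_{\breve{\mathcal E}}$, $\breve\Pi_+=\Pi_+\vert_{\breve{\mathcal E}}$, $\breve\Pi_+^*=P\Pi_+^*$. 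Substituting the three relations $\gamma_z^+P=\breve\gamma_z^+P$, $P(\gamma_{\bar z}^+)^*=(\breve\gamma_{\bar z}^+)^*$ and $PM(z)P\vert_{\breve{\mathcal E}}=\breve M^+(z)+PM^-(z)P$ into the displayed formula yields \eqref{eq:gen_res_trunc}.

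This is essentially a bookkeeping reformulation and I do not expect a genuine obstacle. The points needing a little care are: (i) fixing the isometric identification of $\breve{\mathcal E}$ with the subspace $P\mathcal E\subset\mathcal E$, so that ``restriction'' and ``compression'' are unambiguous and the adjoints behave correctly; (ii) the invertibility of $\breve M^+(z)+PM^-(z)P$ on $\breve{\mathcal E}$ for the relevant $z$ (say $z\in K_\sigma$), which is already contained in the well-posedness of $R_{\rm eff}(z)$ secured in Theorem~\ref{thm:NRA_gen}; and (iii) the inclusion $\ran\breve\Pi_+\subset\ran\Pi_+$, immediate from $\breve\Pi_+=\Pi_+\vert_{\breve{\mathcal E}}$, which is what makes the truncated problem a genuine restriction of \eqref{gamma}.
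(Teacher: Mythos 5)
Your proposal is correct and follows essentially the same route as the paper: the paper's proof simply notes that $\gamma_z^+=(I-z(A_0^+)^{-1})^{-1}\Pi_+$ implies $\breve\gamma_z^+=\gamma_z^+\vert_{\breve{\mathcal E}}$ and then reads off \eqref{eq:gen_res_trunc} from the representation \eqref{eq:BVPBz1} established in Theorem~\ref{thm:NRA_gen}, which is exactly the compression-to-$\breve{\mathcal E}$ bookkeeping you spell out in more detail.
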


Note that here $PM^-(z)P$, as opposed to $\breve{M}^+(z)$, is attributed the meaning of an operator defining nonlocal boundary conditions in the BVP.

\begin{proof}
By the definition of $\gamma_z^+$, one has $\gamma_z^+=(I-z(A_0^+)^{-1})^{-1}\Pi_+$, and therefore $\breve{\gamma}_z^+=\gamma_z^+|_{\breve{\mathcal E}}$. It follows that \eqref{eq:BVPBz1} is equivalent to (\ref{eq:gen_res_trunc}), whence the assertion of Theorem follows.
\end{proof}


\begin{corollary}
The operator $R_{\rm eff}(z)$ is the solution operator  of the problem
\begin{equation*}
\begin{cases}
-\Delta u-zu =f, \quad f\in L^2(\Omega_+),\quad u\in \dom A_0^+\dotplus \ran \breve{\Pi}_+,\\[0.4em]
\,P\Gamma_1^+u
=-PM^-(z)P \Gamma_0^+ u.
\end{cases}
\end{equation*}
\end{corollary}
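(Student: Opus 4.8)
The plan is to identify the boundary-value problem in the statement with the one already analysed in Theorem~\ref{thm:NRA_gen_truncated}, using only the relationship between a Kre\u\i n-type resolvent formula and the BVP it solves. First I would recall that, by Theorem~\ref{thm:NRA_gen_truncated}, the operator $R_{\rm eff}(z)$ is given by \eqref{eq:gen_res_trunc}, namely
\[
R_{\rm eff}(z)=\bigl(A_0^+-z\bigr)^{-1}-\breve{\gamma}_z^+\bigl(\breve{M}^+(z)+PM^-(z)P\bigr)^{-1}(\breve{\gamma}_{\bar z}^+)^*,
\]
with $\breve{\gamma}_z^+$ the solution operator of the Dirichlet problem on the truncated space $\breve{\mathcal E}=P\mathcal E$. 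This is exactly of the shape of the general resolvent formula \eqref{Krein_formula1}, read off in the restricted triple $(A_0^+,\breve{\Lambda}^+,\breve{\Pi}_+)$ with $\alpha=0$, $\beta=I$ after a shift of the boundary operator $\breve{\Lambda}^+$ by $PM^-(z)P$ (this shift being permissible since, as noted after \eqref{eq:B-definition}, the minimal operator $\widetilde A$ does not depend on the parameter operator $\Lambda$). I would invoke the last sentence of Section~\ref{A_albet} — the statement that \eqref{Krein_formula1} is the resolvent of $A_{\alpha\beta}$ with $\widetilde A\subset A_{\alpha\beta}\subset A$ and $\dom(A_{\alpha\beta})\subset\{u:(\alpha\Gamma_0+\beta\Gamma_1)u=0\}$ — to conclude that $R_{\rm eff}(z)$ maps $f\in L^2(\Omega_+)$ to the unique solution $u\in\dom A_0^+\dotplus\ran\breve{\Pi}_+$ of $-\Delta u-zu=f$ subject to the boundary condition obtained from $(\Gamma_1^+ + \text{shift})u=0$ restricted to $\breve{\mathcal E}$.

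Second, I would carry out the bookkeeping that turns the abstract boundary condition into the displayed one. The boundary operator for the shifted truncated triple acts as $P\Gamma_1^+ + PM^-(z)P\,\Gamma_0^+$ on $\breve{\mathcal E}$; the constraint $(\alpha\Gamma_0+\beta\Gamma_1)u=0$ with $\alpha=0$, $\beta=I$ therefore reads $P\Gamma_1^+u = -PM^-(z)P\,\Gamma_0^+u$. Since on $\dom A_0^+\dotplus\ran\breve{\Pi}_+$ one has $\Gamma_0^+u = u|_\Gamma$ (the value of $u$ being automatically in $\breve{\mathcal E}=P\mathcal E$ by construction of $\breve\Pi_+$), this is precisely $P\Gamma_1^+u = -PM^-(z)Pu|_\Gamma$, which is the stated boundary condition. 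The Neumann condition on $\partial\Omega$ is built into $A_0^+$ and $\breve\Pi_+$ and so is automatic. This matches the description of $R_{\rm eff}(z)$ as a generalised resolvent: it is $P_+(A_a-z)^{-1}P_+$-like in that it acts on $L^2(\Omega_+)$ only, and its range consists of the $\Omega_+$-components of the genuine transmission-problem solutions.

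The only genuine point requiring care — and the one I expect to be the main obstacle — is justifying that \eqref{Krein_formula1} does indeed apply in the \emph{truncated/parameter-shifted} triple and that the relevant invertibility hypothesis ($\overline{\alpha+\beta M(z)}$ boundedly invertible) holds here. For this I would note that the operator entering the inversion in \eqref{eq:gen_res_trunc}, namely $\breve M^+(z)+PM^-(z)P$, is precisely $\breve\Lambda^+ + PM^-(z)P + z\breve\Pi_+^*(1-z(A_0^+)^{-1})^{-1}\breve\Pi_+$, which is the $M$-operator of the shifted truncated triple by \eqref{M_operator}; its bounded invertibility for $z\in K_\sigma$ is what underlies the well-posedness already used in Theorem~\ref{thm:NRA_gen} (it follows from the Herglotz property and the compactness of $M(z)^{-1}$ off a discrete set, combined with the block structure), so the hypothesis of \cite[Thm.~5.5]{Ryzh_spec} is met and the corollary follows. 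The remaining verification — that $\breve\Pi_+$ has trivial kernel and range meeting $\dom A_0^+$ trivially, so that the abstract construction of Section~\ref{BTF_sec} genuinely applies on $\breve{\mathcal E}$ — is immediate from the corresponding properties of $\Pi_+$ and the fact that $\breve{\mathcal E}$ is a (one-dimensional) reducing subspace for $\Lambda^-$.
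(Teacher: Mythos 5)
Your proposal is correct and follows the route the paper intends: the corollary is stated without proof precisely because it is the immediate translation of the Kre\u{\i}n-type formula \eqref{eq:gen_res_trunc} (equivalently, \eqref{Krein_formula1} with $\beta=I$ and $\alpha=PM^-(z)P$ on the truncated boundary space) into the boundary condition $P\Gamma_1^+u=-PM^-(z)P\,\Gamma_0^+u$, with $P^\perp\Gamma_0^+u=0$ holding automatically on $\dom A_0^+\dotplus\ran\breve{\Pi}_+$. Your additional remarks on the invertibility of $\breve{M}^+(z)+PM^-(z)P$ for $z\in K_\sigma$ and on the admissibility of the truncated triple correctly identify the only hypotheses that need checking.
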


Equipped with Theorems \ref{thm:NRA_gen} and \ref{thm:NRA_gen_truncated}, we provide a more convenient representation for the asymptotics of $({A}_a-z)^{-1}$ obtained in Theorem \ref{thm:NRA}.

\begin{theorem}
Denote $\mathfrak{K}_z:=\Gamma_0^+|_{\ran(\gamma_z^+P)}$,
$z\in \mathbb C_\pm$. For the resolvent $({A}_a-z)^{-1}$ one has
$$
({A}_a-z)^{-1}=\begin{pmatrix}
R_{\rm eff}(z)&\ \ \bigl(\mathfrak K_{\bar z}\bigl[R_{\rm eff}(\bar z)-(A_0^{+}-\bar z)^{-1}\bigr]\bigr)^*\Pi_-^*\\[0.8em] \Pi_-\mathfrak{K}_z \bigl[R_{\rm eff}(z)-(A_0^{+}-z)^{-1}\bigr] & \ \ \Pi_-\mathfrak K_{z}\bigl(\mathfrak K_{\bar z}\bigl[R_{\rm eff}(\bar z)-(A_0^{+}-\bar z)^{-1}\bigr]\bigr)^*\Pi_-^*
\end{pmatrix}
+O(a^{-1}),
$$
where the operator matrix is written with respect to the decomposition
$L^2(\Omega)=L^2(\Omega_+)\oplus L^2(\Omega_-).$
\end{theorem}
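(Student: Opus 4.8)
The plan is to start from the operator-norm estimate of Theorem~\ref{thm:NRA}, which gives $(A_a-z)^{-1}=(A_{P^\perp,P}-z)^{-1}+O(a^{-1})$ uniformly on $K_\sigma$, insert the Kre\u\i n-type resolvent formula already used in the proof of Theorem~\ref{thm:NRA_gen},
$$
(A_{P^\perp,P}-z)^{-1}=(A_0-z)^{-1}-\gamma_z\,P\bigl(PM(z)P\bigr)^{-1}P\,\gamma_{\bar z}^*
$$
(with $\bigl(\overline{P^\perp+PM(z)}\bigr)^{-1}P=P\bigl(PM(z)P\bigr)^{-1}P$ obtained from the triangular block structure as in the proofs of Theorems~\ref{thm:NRA} and~\ref{thm:NRA_gen}), and then read off its four blocks relative to $L^2(\Omega)=L^2(\Omega_+)\oplus L^2(\Omega_-)$. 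Writing $\gamma_z=\gamma_z^+\oplus\gamma_z^-$ as a column of solution operators into $L^2(\Omega_\pm)$ and $\gamma_{\bar z}^*=\bigl((\gamma_{\bar z}^+)^*,(\gamma_{\bar z}^-)^*\bigr)$ as the corresponding row, the $(\pm,\pm)$-entry of the Kre\u\i n term is built out of $\gamma_z^\pm$, $(\gamma_{\bar z}^\pm)^*$ and $\bigl(PM(z)P\bigr)^{-1}$.

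First I would fix the block form of the ingredients. The reference operator on $\Omega_-$ is the $a$-scaled Dirichlet Laplacian $aA_0^-$ (consistently with \eqref{eq:Mstiff_asymp}), so $\bigl[(A_0-z)^{-1}\bigr]_{--}=(aA_0^--z)^{-1}=a^{-1}(A_0^--z/a)^{-1}=O(a^{-1})$, while $\bigl[(A_0-z)^{-1}\bigr]_{++}=(A_0^+-z)^{-1}$ and the off-diagonal blocks of $(A_0-z)^{-1}$ vanish; moreover $\gamma_z^-=\bigl(I-(z/a)(A_0^-)^{-1}\bigr)^{-1}\Pi_-=\Pi_-+O(a^{-1})$ and, by taking adjoints, $(\gamma_{\bar z}^-)^*=\Pi_-^*+O(a^{-1})$, all uniformly on $K_\sigma$. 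Next I would express $\bigl(PM(z)P\bigr)^{-1}$ through $R_{\rm eff}$ and $\mathfrak{K}_z$: from \eqref{eq:calc_gen_res} and \eqref{eq:BVPBz1} one has $R_{\rm eff}(z)-(A_0^+-z)^{-1}=-\gamma_z^+P\bigl(PM(z)P\bigr)^{-1}P(\gamma_{\bar z}^+)^*$, whose range lies in $\ran(\gamma_z^+P)$, so that applying $\mathfrak{K}_z=\Gamma_0^+|_{\ran(\gamma_z^+P)}$ and using $\Gamma_0^+\gamma_z^+=I_{\mathcal E}$, hence $\mathfrak{K}_z\gamma_z^+P=P$, yields
$$
\mathfrak{K}_z\bigl[R_{\rm eff}(z)-(A_0^+-z)^{-1}\bigr]=-\bigl(PM(z)P\bigr)^{-1}P(\gamma_{\bar z}^+)^*;
$$
taking adjoints of this, together with $M(z)^*=M(\bar z)$, gives
$$
\bigl(\mathfrak{K}_{\bar z}\bigl[R_{\rm eff}(\bar z)-(A_0^+-\bar z)^{-1}\bigr]\bigr)^*=-\gamma_z^+P\bigl(PM(z)P\bigr)^{-1}.
$$

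Substituting these two identities into the blocks of $-\gamma_zP(PM(z)P)^{-1}P\gamma_{\bar z}^*$ and inserting the block forms above, the $(+,+)$-entry becomes $(A_0^+-z)^{-1}+\bigl(R_{\rm eff}(z)-(A_0^+-z)^{-1}\bigr)=R_{\rm eff}(z)$ (this is Theorem~\ref{thm:NRA_gen}), the $(-,+)$-entry becomes $\gamma_z^-\mathfrak{K}_z\bigl[R_{\rm eff}(z)-(A_0^+-z)^{-1}\bigr]=\Pi_-\mathfrak{K}_z\bigl[R_{\rm eff}(z)-(A_0^+-z)^{-1}\bigr]+O(a^{-1})$, the $(+,-)$-entry becomes $\bigl(\mathfrak{K}_{\bar z}\bigl[R_{\rm eff}(\bar z)-(A_0^+-\bar z)^{-1}\bigr]\bigr)^*\Pi_-^*+O(a^{-1})$ (using that $\bigl(\mathfrak{K}_{\bar z}[\cdots]\bigr)^*$ annihilates $P^\perp\mathcal E$), and the $(-,-)$-entry becomes $\Pi_-\mathfrak{K}_z\bigl(\mathfrak{K}_{\bar z}\bigl[R_{\rm eff}(\bar z)-(A_0^+-\bar z)^{-1}\bigr]\bigr)^*\Pi_-^*+O(a^{-1})$, where the last step uses $\mathfrak{K}_z\gamma_z^+P=P$ once more, identifying $\mathfrak{K}_z\bigl(\mathfrak{K}_{\bar z}[\cdots]\bigr)^*=-\bigl(PM(z)P\bigr)^{-1}$, and the $O(a^{-1})$ absorbs $\bigl[(A_0-z)^{-1}\bigr]_{--}$. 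Adding the $O(a^{-1})$ from Theorem~\ref{thm:NRA} produces the claimed block matrix. The step requiring most care is the uniformity on $K_\sigma$ of all remainders, which reduces to the uniform (in $a$) boundedness and bounded invertibility of $PM(z)P$ there: boundedness holds since $\psi_*$ spans $\ker\Lambda^-$, so $P\Lambda^-P=0$ annihilates the singular term $a\Lambda^-$ of $M^-(z)$ in \eqref{eq:Mstiff_asymp}, leaving $PM(z)P=\breve{M}^+(z)+zP\Pi_-^*\Pi_-P+O(a^{-1})$; bounded invertibility on $K_\sigma$ is the very hypothesis under which $R_{\rm eff}(z)$ was constructed from \eqref{Krein_formula1} in Theorem~\ref{thm:NRA_gen}, and is uniform in $a$ there because it is controlled by the imaginary part of the Herglotz operator $M$, which is bounded below on $K_\sigma$. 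Given this, $\gamma_z^-=\Pi_-+O(a^{-1})$ and $(\gamma_{\bar z}^-)^*=\Pi_-^*+O(a^{-1})$, multiplied by the uniformly bounded operators $\bigl(PM(z)P\bigr)^{-1}P(\gamma_{\bar z}^+)^*$ and $\gamma_z^+P\bigl(PM(z)P\bigr)^{-1}$, contribute only $O(a^{-1})$, which completes the proof.
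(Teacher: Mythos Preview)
Your proposal is correct and follows essentially the same route as the paper: start from $(A_a-z)^{-1}=(A_{P^\perp,P}-z)^{-1}+O(a^{-1})$, write the Kre\u\i n formula for $(A_{P^\perp,P}-z)^{-1}$ in block form via $\gamma_z=\gamma_z^+\oplus\gamma_z^-$, use $\Gamma_0^+\gamma_z^+=I$ (equivalently $\mathfrak{K}_z\gamma_z^+P=P$) together with \eqref{eq:BVPBz1} to trade $\bigl(PM(z)P\bigr)^{-1}P(\gamma_{\bar z}^+)^*$ for $-\mathfrak{K}_z\bigl[R_{\rm eff}(z)-(A_0^+-z)^{-1}\bigr]$ and its adjoint, and finally replace $\gamma_z^-$ by $\Pi_-$ and the weighted resolvent on $\Omega_-$ by zero at the cost of $O(a^{-1})$. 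The paper carries out exactly these manipulations, entry by entry; your version is slightly more explicit about the replacement $\gamma_z^-=\Pi_-+O(a^{-1})$ and about the uniformity in $a$ of the remainder, but the argument is the same.
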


\begin{proof}
First, we note that since $\ran(\gamma_z^+P)$ is one-dimensional, the operator $\mathfrak{K}_z$ is well defined as a bounded linear operator from $\ran(\gamma_z^+P)$ to $\mathcal E,$ where the former is equipped with the standard norm of $L^2(\Omega_+)$ . We proceed by representing the operator $(A_{P^\perp ,P}-z)^{-1},$ see Theorem \ref{thm:NRA}, in a block-operator matrix form relative to the orthogonal decomposition ${\mathcal H}=L^2(\Omega_+)\oplus L^2(\Omega_-)$. The upper left matrix entry  $P_+ (A_{P^\perp ,P}-z)^{-1} P_+$ due to Theorem  \ref{thm:NRA_gen} is $O(a^{-1})$-close to $R_{\rm eff}(z).$ Next we consider the lower left matrix entry:
\begin{align*}
P_-\bigl(A_{P^\perp , P} -z\bigr)^{-1}P_+&= -\gamma^-_zP\bigl (PM^+(z)P+PM^-(z)P\bigr)^{-1}P\bigl(\gamma^+_{\bar z}\bigr)^*
\\[0.3em]
&
=-\gamma^-_z\Gamma_0^+ \gamma^+_zP\bigl (PM^+(z)P+PM^-(z)P\bigr)^{-1}P\bigl(\gamma^+_{\bar z}\bigr)^*\\[0.4em]
&=\gamma^-_z \Gamma_0^+\bigl[R_{\rm eff}(z)-(A_0^{+}-z)^{-1}\bigr]
=\gamma^-_z \mathfrak{K}_z\bigl[R_{\rm eff}(z)-(A_0^{+}-z)^{-1}\bigr],
\end{align*}
where $\gamma_z^-: \phi\mapsto u_\phi$ is the solution operator of the BVP ({\it cf.} (\ref{gamma}))
\begin{equation*}
\begin{cases}
-\Delta u_\phi-z u_\phi=0,\quad u_\phi\in \dom A_0^-\dotplus \ran\Pi_-,\\[0.2em]
\Gamma_0^-u_\phi= \phi.
\end{cases}
\end{equation*}
Here in the second equality we use the fact that $\Gamma_0^+ \gamma^+_z=I,$ and in the third equality we use (\ref{Krein_formula1}),
see also (\ref{eq:BVPBz1}).
Passing over to the top-right entry,
we write
\begin{align*}
P_+\bigl(A_{P^\perp ,P} -z\bigr)^{-1}P_-&= -\gamma^+_zP\bigl (P M^+(z)P+PM^-(z)P\bigr)^{-1}P(\gamma^-_{\bar z})^*
\\[0.3em]
&
=\bigl(\mathfrak K_{\bar z}\bigl[R_{\rm eff}(\bar z)-(A_0^{+}-\bar z)^{-1}\bigl]\bigr)^*(\gamma^-_{\bar z})^*
\\[0.25em]
&
=\bigl(\mathfrak K_{\bar z}\bigl[R_{\rm eff}(\bar z)-(A_0^{+}-\bar z)^{-1}\bigr]\bigr)^*\Pi_-^*\bigl(1-z (A_0^{-})^{-1}\bigr)^{-1},
\end{align*}
and the claim pertaining to the named entry follows by a virtually unchanged argument. Finally, for the bottom-right entry
we have
$$
P_-\bigl(A_{P^\perp , P} -z\bigr)^{-1}P_- =
(A_0^{-}-z)^{-1}+\gamma^-_z \mathfrak K_{z}\bigl(\mathfrak K_{\bar z}\bigl[R_{\rm eff}(\bar z)-\bigl(A_0^{+}-\bar z\bigr)^{-1}\bigr]\bigr)^*\bigl(\gamma^-_{\bar z}\bigr)^*,
$$
which completes the proof.
\end{proof}

The representation for $R_{\rm eff}(z)$ given by Theorem \ref{thm:NRA_gen_truncated}
allows us to further simplify the asymptotics of
$({A}_a-z)^{-1},$ using the fact that
$$
PM^-(z)P=P\Lambda^-P+zP\Pi_-^*\Pi_-P + O(a^{-1})=z\breve{\Pi}_-^*\breve{\Pi}_- +O(a^{-1}),\quad \breve{\Pi}_-:=\Pi_-\vert_{\breve{\mathcal E}}.
$$
As a result, one has\footnote{We remark that the generalised resolvent $\widetilde{R}_{\rm eff}(z)$ is thus the solution operator of a spectral BVP of a special class. Namely, the dependence  of its boundary condition on the spectral parameter is linear with respect to the latter. Such spectral BVPs were considered e.g. in \cite{Shkalikov_1983}.}
\begin{equation*}
R_{\rm eff}(z)=\widetilde{R}_{\rm eff}(z)+O(a^{-1}),\quad\ \
\widetilde{R}_{\rm eff}(z):= (A_0^+-z)^{-1}-\breve{\gamma}_z^+\bigl(\breve{M}^+(z)+z\breve{\Pi}_-^*\breve{\Pi}_-\bigr)^{-1} (\breve{\gamma}_{\bar z}^+)^*,
\end{equation*}
and hence the following result holds.
\begin{theorem}
\label{thm:main_fin}
The resolvent $({A}_a-z)^{-1}$ has the following asymptotics in the operator-norm topology:
\begin{equation}
({A}_a-z)^{-1}=\begin{pmatrix}
\widetilde{R}_{\rm eff}(z)&\ \ \bigl(\mathfrak K_{\bar z}\bigl[\widetilde{R}_{\rm eff}(\bar z)-(A_0^{+}-\bar z)^{-1}\bigr]\bigr)^*\breve{\Pi}_-^*\\[0.8em] \breve{\Pi}_-\mathfrak{K}_z \bigl[\widetilde{R}_{\rm eff}(z)-(A_0^{+}-z)^{-1}\bigr] & \ \ \breve{\Pi}_-\mathfrak K_{z}\bigl(\mathfrak K_{\bar z}\bigl[\widetilde{R}_{\rm eff}(\bar z)-(A_0^{+}-\bar z)^{-1}\bigr]\bigr)^*\breve{\Pi}_-^*
\end{pmatrix}+
O(a^{-1}),
\label{716}
\end{equation}
where the operator matrix is written with respect to the decomposition
$L^2(\Omega)=L^2(\Omega_+)\oplus L^2(\Omega_-).$
\end{theorem}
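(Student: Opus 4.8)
The plan is to derive Theorem~\ref{thm:main_fin} directly from the block-matrix representation of $({A}_a-z)^{-1}$ established in the theorem immediately preceding it, by performing two substitutions inside that representation: trading the effective resolvent $R_{\rm eff}(z)$ for its $a$-independent simplification $\widetilde R_{\rm eff}(z)$, and trading the harmonic lift $\Pi_-$ for the truncated lift $\breve\Pi_-$. Because the preceding theorem already supplies the matrix written with $R_{\rm eff}$ and $\Pi_-$ modulo an $O(a^{-1})$ remainder, it suffices to check that the two resulting $2\times2$ operator matrices agree up to $O(a^{-1})$ in the operator norm, uniformly for $z\in K_\sigma$ and $a\ge a_0$.

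The first substitution is the identity $R_{\rm eff}(z)=\widetilde R_{\rm eff}(z)+O(a^{-1})$ recorded just above the statement; it follows from Theorem~\ref{thm:NRA_gen_truncated}, the additivity $M(z)=M^+(z)+M^-(z)$, and the asymptotics $PM^-(z)P=z\breve\Pi_-^*\breve\Pi_-+O(a^{-1})$ (a consequence of \eqref{eq:Mstiff_asymp} together with $P\Lambda^-P=0$), combined with the uniform bounded invertibility on $K_\sigma$ of the scalar blocks $PM(z)P$ and $\breve M^+(z)+z\breve\Pi_-^*\breve\Pi_-$. For the second substitution the relevant structural fact, visible in \eqref{eq:calc_gen_res} and \eqref{eq:BVPBz1}, is that $R_{\rm eff}(z)-(A_0^+-z)^{-1}=-\gamma_z^+P\bigl(PM(z)P\bigr)^{-1}P(\gamma_{\bar z}^+)^*$ has range inside the one-dimensional subspace $\ran(\gamma_z^+P)=\ran\breve\gamma_z^+$ of $L^2(\Omega_+)$; since $\mathfrak K_z=\Gamma_0^+|_{\ran(\gamma_z^+P)}$ and $\Gamma_0^+\gamma_z^+=I$, the operator $\mathfrak K_z\bigl[R_{\rm eff}(z)-(A_0^+-z)^{-1}\bigr]$ therefore takes values in $P\mathcal E=\breve{\mathcal E}$. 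On that subspace $\Pi_-$ coincides with $\breve\Pi_-$, and dually $\Pi_-^*$ may be replaced by $\breve\Pi_-^*$ whenever it is post-composed with an operator whose adjoint has range in $\breve{\mathcal E}$; hence every $\Pi_-$ (respectively $\Pi_-^*$) appearing in the preceding representation can be traded for $\breve\Pi_-$ (respectively $\breve\Pi_-^*$) with no error incurred.

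I would then carry out both substitutions entry by entry. Trading $R_{\rm eff}$ for $\widetilde R_{\rm eff}$ costs only $O(a^{-1})$ in each of the four entries, since the discrepancy $R_{\rm eff}(z)-\widetilde R_{\rm eff}(z)$ is $O(a^{-1})$ with range in the fixed one-dimensional space $\ran(\gamma_z^+P)$, on which $\mathfrak K_z$ and $\mathfrak K_{\bar z}$ are bounded uniformly over $K_\sigma$, while $\breve\Pi_-$ is likewise uniformly bounded there; the $O(a^{-1})$ factors already present in the preceding representation are untouched. Collecting the four entries reproduces the matrix in \eqref{716}, which proves the theorem.

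The one point requiring attention is the uniformity of all these estimates over $z\in K_\sigma$ and $a\ge a_0$. The families $\gamma_z^\pm$, $(A_0^\pm-z)^{-1}$ and $\bigl(1-z(A_0^-)^{-1}\bigr)^{-1}$ are analytic on $\mathbb C\setminus\mathbb R\supset K_\sigma$, hence uniformly bounded on the compact $K_\sigma$; the norm of $\mathfrak K_z$ equals $1/\|\gamma_z^+\psi_*\|_{L^2(\Omega_+)}$, which is continuous and positive on $K_\sigma$ (as $\Pi_+\psi_*\ne0$) and therefore bounded there as well; and the scalar function $\breve M^+(z)+z\breve\Pi_-^*\breve\Pi_-$ is analytic on $\mathbb C\setminus\mathbb R$ and has non-vanishing imaginary part off the real axis (being a sum of genuine Herglotz terms), so its modulus is bounded below on the compact $K_\sigma$; since $PM(z)P=\breve M^+(z)+z\breve\Pi_-^*\breve\Pi_-+O(a^{-1})$, the same lower bound persists for $PM(z)P$ once $a\ge a_0$. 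Granting these uniform bounds, the rest of the argument is bookkeeping with formulae already in place, so I expect no substantive obstacle beyond this uniformity check.
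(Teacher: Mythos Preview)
Your proposal is correct and follows the same route the paper takes: the paper presents Theorem~\ref{thm:main_fin} as an immediate consequence (``hence the following result holds'') of the preceding block-matrix representation involving $R_{\rm eff}$ and $\Pi_-$, combined with the displayed identity $R_{\rm eff}(z)=\widetilde R_{\rm eff}(z)+O(a^{-1})$, and your two substitutions---replacing $R_{\rm eff}$ by $\widetilde R_{\rm eff}$ at cost $O(a^{-1})$ and replacing $\Pi_-$ by $\breve\Pi_-$ at no cost because the relevant compositions factor through $\breve{\mathcal E}$---are precisely what is intended. Your added paragraph on uniformity over $K_\sigma$ makes explicit what the paper leaves implicit, but there is no divergence in approach.
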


\subsection{An out-of-space extension and the ``electrostatic" problem}

Consider the space $H_{\rm eff}=L^2(\Omega_+)\oplus \mathbb C$ and the following linear subset of $L^2(\Omega):$
\begin{equation}
\label{eq:domain_fin_modI}
\dom \mathcal A_{\rm eff}=\biggl\{
\binom{u_+}{\eta}\in H_{\rm eff}:\ u_+\in H^2(\Omega_+),\ \ u_+|_\Gamma=\frac{\eta}{\sqrt{|\Omega_-|}}{\mathbbm 1}_\Gamma, \ \ \dfrac{\partial u_+}{\partial n_+}\biggr\vert_{\partial\Omega}=0\biggr\},
\end{equation}
where $u|_\Gamma$
is the trace of the function $u$  and ${\mathbbm 1}_\Gamma$ is the unity function on $\Gamma.$
On $\dom \mathcal A_{\rm eff}$ we set the action of the operator ${\mathcal A}_{\rm eff}$ by the formula
\vskip -0.3cm
\begin{equation*}
\mathcal A_{\rm eff}\binom{u_+}{\eta}=
\left(\begin{array}{c}-\Delta u_+\\[0.6em]
\dfrac{1}{\sqrt{|\Omega_-|}}\mathop{\mathlarger{\mathlarger{\int}}}_{\!\!\!\Gamma}\dfrac{\partial u_+}{\partial n_+}
\end{array}\right).
\end{equation*}

\begin{remark}
Note that the resolvent of $\mathcal A_{\rm eff}$ is the so-called  Strauss dilation \cite{Strauss,Strauss_survey} of the generalised resolvent $\widetilde{R}_{\rm eff}(z)$.
\end{remark}

Next we use \cite[Theorem 4.4]{CherErKis}, which in view of Theorem 4.6 implies the following theorem.

\begin{theorem}\label{mmm}
  The resolvent $(\mathcal A_{\rm eff}-z)^{-1}$ is unitary equivalent to the block operator matrix on the right hand side of   (\ref{716}), , treated as an operator in $L^2(\Omega_+)\oplus \Pi_-\breve{\mathcal E}$.
\end{theorem}

We remark that as it is easily seen the block operator matrix on the right hand side of  (\ref{716}) is equal to zero in the orthogonal complement to the subspace $L^2(\Omega_+)\oplus \Pi_-\breve{\mathcal E}$.



Together with the spectral theorem for self-adjoint operators, the result obtained in Theorem  \ref{mmm} immediately yields  (see, e.g., \cite{MR1192782}) the following corollary.

\begin{corollary}
The spectra of the operators $A_a$ converge in the sense of Hausdorff, as $a\to\infty,$ with an order $O(a^{-1})$ error estimate, uniformly on compact subsets of ${\mathbb C},$ to the spectrum of the operator ${\mathcal A}_{\rm eff}.$
\end{corollary}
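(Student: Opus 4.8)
The plan is to derive the asserted convergence of spectra, with the stated rate, from the operator-norm resolvent estimate already in hand, using the classical principle that for self-adjoint operators closeness of the resolvents at a \emph{single} non-real point controls the two spectra quantitatively. I would begin by recording two structural facts. First, $A_a$ is self-adjoint --- it is the operator of the transmission problem \eqref{eq:transmissionBVP}, \emph{i.e.} $A_a=A_{0I}$ --- and so is $\mathcal A_{\rm eff}$, which realises the electrostatic problem, \emph{cf.}\ \eqref{qform}; hence $\spec(A_a),\spec(\mathcal A_{\rm eff})\subset\reals$. Second, the map $\iota\colon H_{\rm eff}\to L^2(\Omega)$, $\binom{u_+}{\eta}\mapsto u_+\oplus\bigl(|\Omega_-|^{-1/2}\eta\,\mathbbm 1_{\Omega_-}\bigr)$, is an isometry onto the closed subspace $L^2(\Omega_+)\oplus\Span\{\mathbbm 1_{\Omega_-}\}$, so $\iota^*\iota=I_{H_{\rm eff}}$, and --- by the identification recalled immediately before the statement --- the block-operator matrix on the right-hand side of \eqref{716} equals $T(z):=\iota(\mathcal A_{\rm eff}-z)^{-1}\iota^*$. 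Fixing a non-real point $z_0$ (say $z_0=\I$; the compact set in Theorem~\ref{thm:main_fin} may be enlarged so as to contain it), Theorem~\ref{thm:main_fin} then produces a constant $C>0$ with $\|(A_a-z_0)^{-1}-T(z_0)\|_{L^2(\Omega)\to L^2(\Omega)}\le Ca^{-1}$ for all $a$ large enough.

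Next I would pass to the spectra of these two bounded operators. Both $(A_a-z_0)^{-1}$ and $T(z_0)$ are \emph{normal}: the former as the resolvent of a self-adjoint operator, the latter because conjugation by the isometry $\iota$ preserves normality (one checks $T(z_0)T(z_0)^*=\iota(\mathcal A_{\rm eff}-z_0)^{-1}(\mathcal A_{\rm eff}-\cc{z_0})^{-1}\iota^*=T(z_0)^*T(z_0)$, the resolvents of $\mathcal A_{\rm eff}$ commuting). For bounded normal $S_1,S_2$ the elementary inequality $d_{\mathrm H}(\spec S_1,\spec S_2)\le\|S_1-S_2\|$ holds (if ${\rm dist}(\lambda,\spec S_2)>\|S_1-S_2\|$ then $S_1-\lambda$ is invertible by a Neumann series, as $\|(S_2-\lambda)^{-1}\|={\rm dist}(\lambda,\spec S_2)^{-1}$ by normality; conclude by symmetry). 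Combined with the estimate above this gives $d_{\mathrm H}\bigl(\spec((A_a-z_0)^{-1}),\spec(T(z_0))\bigr)\le Ca^{-1}$. Finally, the spectral mapping theorem for resolvents together with the block structure $T(z_0)=\diag\bigl((\mathcal A_{\rm eff}-z_0)^{-1},0\bigr)$ relative to $\ran\iota\oplus(\ran\iota)^\perp$ identifies $\spec\bigl((A_a-z_0)^{-1}\bigr)=\overline{\{(t-z_0)^{-1}:t\in\spec(A_a)\}}$ and $\spec\bigl(T(z_0)\bigr)=\overline{\{(t-z_0)^{-1}:t\in\spec(\mathcal A_{\rm eff})\}}$, the closures merely adjoining the point $0$.

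The last step transports this estimate back to the spectral parameter via the M\"obius map $w\mapsto z_0+w^{-1}$. Fixing a compact $K\subset\complex$ and noting that the spectra are real and bounded within $K$, every value $(t-z_0)^{-1}$ encountered below lies in a fixed annulus $\{r_1\le|w|\le r_2\}$, $0<r_1<r_2$ depending only on $K$ and $z_0$, on which $w\mapsto z_0+w^{-1}$ is Lipschitz with constant $r_1^{-2}$. Hence, once $a$ is so large that $Ca^{-1}<r_1/2$: for $t\in\spec(A_a)\cap K$ the point $w=(t-z_0)^{-1}$ lies within $Ca^{-1}$ of some $w'\in\spec(T(z_0))$ with $|w'|\ge r_1/2>0$, so $w'=(t'-z_0)^{-1}$ for some $t'\in\spec(\mathcal A_{\rm eff})$ and $|t-t'|\le 2Cr_1^{-2}a^{-1}$; the symmetric argument bounds ${\rm dist}(t',\spec(A_a))$ by $2Cr_1^{-2}a^{-1}$ for $t'\in\spec(\mathcal A_{\rm eff})\cap K$. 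This is precisely the claimed $O(a^{-1})$ convergence of $\spec(A_a)$ to $\spec(\mathcal A_{\rm eff})$, uniformly on $K$.

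I do not expect a serious obstacle, as the new analytic content --- the operator-norm resolvent estimate --- is already established; the only points requiring care are of a bookkeeping nature: making rigorous that the right-hand side of \eqref{716} is genuinely the extended-by-zero resolvent of the self-adjoint out-of-space operator $\mathcal A_{\rm eff}$, so that the limiting object is a bona fide self-adjoint spectrum, and verifying that the rate $O(a^{-1})$ survives the nonlinear change of variables --- which it does, because on the compact $K$ the arc $\{(t-z_0)^{-1}:t\in K\cap\reals\}$ stays a fixed distance away from the origin.
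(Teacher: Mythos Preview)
Your argument is correct and is precisely the standard route from operator-norm resolvent convergence to Hausdorff convergence of spectra; the paper states the result as an immediate corollary of Theorem~\ref{thm:main_fin} together with the identification of the block matrix in \eqref{716} as the zero-extension of $(\mathcal A_{\rm eff}-z)^{-1}$, without spelling out a proof. You have supplied exactly the details the paper leaves implicit (normality of the limiting compressed resolvent, the Hausdorff estimate for normal operators, and the Lipschitz transport under $w\mapsto z_0+w^{-1}$ away from $w=0$), so there is no divergence in approach.
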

An explicit representation for the spectrum of ${\mathcal A}_{\rm eff},$ {\it i.e.} the set of $z\in{\mathbb C}$ for which the problem
\begin{equation}
\mathcal A_{\rm eff}\binom{u_+}{\eta}=z\binom{u_+}{\eta},
\label{eff_eig}
\end{equation}
has a nontrivial solution $(u_+, \eta)^\top,$ can be obtained as follows. We represent $u_+\in H^2(\Omega_+)$ in the form $u_+=v+c,$ where $c\in{\mathbb C}$ is related to $\eta$ by the formula $c\sqrt{|\Omega_-|}=\eta,$ {\it cf.} (\ref{eq:domain_fin_modI}), and $v$ solves the problem $-\Delta v=z(v+c)$ subject to the Neumann boundary condition $({\partial v}/{\partial n_+})\vert_{\partial\Omega}=0$ and the Dirichlet boundary condition $v|_\Gamma=0,$
or equivalently $v=zc(A_0^+-z)^{-1}{\mathbbm 1}_{\Omega_+},$ where ${\mathbbm 1}_{\Omega_+}$ is the unity function on $\Omega_+.$
Therefore, in terms of the pair $(v, c)^\top$ the eigenvalue problem (\ref{eff_eig}) admits the form:
\begin{equation}
\left(\begin{array}{c}-\Delta v\\[0.4em]
\dfrac{1}{\sqrt{|\Omega_-|}}\mathop{\mathlarger{\mathlarger{\int}}}_{\!\!\!\Gamma}\dfrac{\partial v}{\partial n_+}
\end{array}\right)=z\binom{v+c}{c\sqrt{|\Omega_-|}},
\label{eig_reform}
\end{equation}
and so its solvability is easily seen to be equivalent to the boundary equation appearing in the second component of (\ref{eig_reform}):
\vskip -0.5cm
\begin{equation}
\dfrac{1}{\sqrt{|\Omega_-|}}\mathop{\mathlarger{\mathlarger{\int}}}_{\!\!\!\Gamma}\dfrac{\partial v}{\partial n_+}=zc\sqrt{|\Omega_-|}.
\label{dispersion_explicit}
\end{equation}
Suppose first that $c\neq 0.$

Denoting as in Introduction above by $\lambda_j^+,$ $j=1,2,\dots,$ and $\phi_j^+,$ $j=1,2,\dots$ the eigenvalues and the corresponding normalised eigenfunctions of the operator $A_0^+,$ the relation (\ref{dispersion_explicit}) is reduced to $c=-|\Omega|^{-1}\int_{\Omega_+} v_+$ using the Green's formula and the equality  $-\Delta v = z(v+c)$. Therefore, based on the calculation contained in Introduction, the  solvability of \eqref{eff_eig} turns out to be equivalent to  (\ref{electrostatic_spectrum}).
Alternatively, if $c=0,$ which corresponds to the case when $\eta=0,$ the function $u_+$ is clearly an eigenfunction of $A_0^+$,  and it can easily be shown to have zero mean\footnote{This situation has been shown \cite{Albert} to be non-generic, at least for the case of simply connected domains and the Dirichlet condition on the boundary.} over $\Omega_+.$

\begin{remark}
It has been conjectured \cite{Smilansky} (and established rigorously in the case of the Dirichlet Laplace operator in two dimensions \cite{EP}) that the eigenfunctions of a BVP on a sufficiently regular bounded domain $\Omega$ can be ``nearly extended" (see \cite{EP} for rigorous details) to generalised eigenfunctions of the whole-space problem, corresponding to the same eigenvalue. This can be interpreted as an effect of ``transparency'' of the domain $\Omega$ to the waves of certain wavelengths.
The set of values described by (\ref{electrostatic_spectrum}) can therefore be interpreted as the set of wavelengths at which a similar effect of transparency occurs for problems (\ref{eq:transmissionBVP}), as $a\to\infty,$ in other words for problems with low-index dielectric inclusions. A rigorous proof of this observation requires the development of scattering theory for high-contrast transparent obstacles, and it will be addressed in a separate publication.
\end{remark}

\begin{remark} The spectrum of $A^+_0,$ contrary to what would seem from Theorem \ref{thm:main_fin}, in the generic case does not enter the spectrum of (\ref{eff_eig}).
The mechanism for this is described in \cite{Mikhailova_Pavlov}.
\end{remark}

\section*{Acknowledgements}

The authors gratefully acknowledge a number of useful remarks made by the referees.

KDC is grateful for the financial support of
EPSRC: grants EP/L018802/2 and EP/V013025/1.
AVK has been supported by the RSF grant 20-11-20032.
The work of KDC and LOS was also partially supported by the grant of CONACyT CF-2019 No.\,304005.
LOS is grateful
for the financial support of PASPA-DGAPA-UNAM during his sabbatical
leave and thanks the University of Bath for their hospitality. KDC and LOS are grateful for the financial support of the Royal Society Newton Fund: Grant ``Homogenisation of degenerate equations and scattering for new materials"

\end{document}